\documentclass[a4paper,12pt]{amsart}
\subjclass[2017]{Primary: 11G05; Secondary: 11B37, 11B39, 11B05}
\keywords{elliptic divisibility sequence, elliptic curve, greatest common divisor}

\addtolength{\hoffset}{-2cm}           
\addtolength{\textwidth}{4cm}
\addtolength{\voffset}{-.6cm}
\addtolength{\textheight}{1.2cm}

\usepackage{setspace,graphicx}     
\usepackage{stackrel}                                          
\usepackage{amssymb}
\usepackage{verbatim} 

\usepackage{blindtext}
\usepackage{tabularx}
\usepackage{longtable}

\usepackage{amsmath} 
\usepackage{amsthm} 
\usepackage{mathrsfs} 
\usepackage{amsfonts} 
\usepackage{latexsym} 
\usepackage{amscd} 
\usepackage[latin1]{inputenc} 
\usepackage[english]{babel} 
\usepackage{enumerate} 
\usepackage{afterpage}
\usepackage{tikz-cd}
 
\language 2
\allowdisplaybreaks

\hyphenation{par-ti-cu-lar}
\hyphenation{homeo-mor-phic}

\def\p{\mathfrak{p}}

\newcommand{\F}{\mathbb{F}}  
\newcommand{\N}{\mathbb{N}} 
\newcommand{\Z}{\mathbb{Z}} 
\newcommand{\Q}{\mathbb{Q}}  
\newcommand{\D}{\mathbf{D}} 
 
\newcommand{\A}{\mathscr{A}}
\newcommand{\B}{\mathscr{B}}
\newcommand{\NN}{\mathscr{N}}
\newcommand{\MM}{\mathscr{M}}
\newcommand{\QQ}{\mathscr{Q}}
\newcommand{\LL}{\mathscr{L}}
\newcommand{\BB}{\mathcal{B}}

\newtheorem{Theorem}{Theorem}[]
\newtheorem{Lemma}[Theorem]{Lemma} 
 
\newtheorem{Proposition}[Theorem]{Proposition}

\theoremstyle{definition}
\newtheorem{Definition}[Theorem]{Definition}

\newtheorem{Remark}[Theorem]{Remark} 
\newtheorem{Example}[Theorem]{Example}

\DeclareMathOperator{\lcm}{lcm}
\DeclareMathOperator{\Spec}{Spec}

\newtheorem{ind}[]{{\rm\it Indice}}

\begin{document}

\author{Seoyoung Kim}
\email{Seoyoung\_Kim@math.brown.edu}
\address{Mathematics Department, Brown University, Box 1917, 151 Thayer Street, Providence, RI 02912 USA}
\title[Terms having a fixed G.C.D. with their index in an EDS]{The density of the terms in an elliptic divisibility sequence having a fixed G.C.D. with their index}

\date{\today}

\begin{abstract}
Let $\D=(D_{n})_{n\geq 1}$ be an elliptic divisibility sequence associated to the pair $(E,P)$. For a fixed integer $k$, we define $\A_{E,k}=\{n\geq 1 : \gcd(n,D_{n})=k\}$. We give an explicit structural description of $\A_{E,k}$. Also, we explain when $\A_{E,k}$ has positive asymptotic density using bounds related to the distribution of trace of Frobenius of $E$. Furthermore, we get explicit density of $\A_{E,k}$ using the M\"obius function.
\end{abstract}

\maketitle

%
%

\section{Introduction}
\noindent

Let $E/\Q$ be an elliptic curve defined by a Weierstrass equation and choose a nontorsion point $P\in E(\Q)$.

\begin{Definition}
The {\it elliptic divisibility sequence} (EDS) associated to the pair $(E,P)$ is the sequence $\D=(D_{n})_{n\geq 1}:\N \rightarrow \N$ defined by writing the $x$-coordinate of $[n]P$ as a fraction in lowest terms
$$x([n]P)=\frac{A_{n}}{D_{n}^{2}}\in\Q,$$
where $[n]$ is the multiplication-by-$n$ map. The EDS is {\it minimal} if $E/\Q$ is defined by a minimal Weierstrass equation and we call it {\it normalized} if $D_{1}=1$. It is worth noting that every EDS could be normalized by change of variables in the original Weierstrass equation with which one can get the new EDS $(D_{n}/D_{1})_{n \geq 1}$. Also, note that not every EDS can be both normalized and minimal.
\end{Definition}

As one can guess from its name, EDS is a divisibility sequence, that is to say,
$$m\mid n~\Longrightarrow~D_{m}\mid D_{n}$$
which can be deduced from \cite[Lemma 5]{SS}. Moreover, it satisfies the strong divisibility condition
$$\gcd(D_{n},D_{m})=D_{\gcd(n,m)}.$$
From now on, we assume that $(D_{n})_{n\geq 1}$ is minimal.
Note that for all good primes $p$, we know
$$p \mid D_{n} ~\Leftrightarrow ~ [n]P\equiv O ~(mod~p).$$
For more general statement, one can refer to Lemma \ref{PreLem}. Also, we can be naturally interested in the following values.

\begin{Definition}
Define $r_{n}=r_{n}(\D)$ the {\it rank of apparition} (or {\it entry point}) of $n$ in $\D$ to be
$$r_{n}=\min\{r\geq 1 : n\mid D_{r}\}.$$
That is to say, $r_{n}$ is the minimal index of the terms of $\D$ which are divisible by $n$. Also define
$$g(n)=\gcd(n,D_{n})$$
and
$$l(n)=\lcm(n,r_{n}).$$
\end{Definition}

For each positive integer $k$, we are interested in the following set
$$\A_{E,k}=\{n\geq 1 : g(n)=k\}.$$

Moreover, for a given set $S$ with positive integers, define the {\it asymptotic density} $\textbf{d}(S)$ as follows:
$$\textbf{d}(S)=\lim_{x\to\infty}\frac{\#(S\cap[1,x])}{x}$$

We are interested in the asymptotic density of $\A_{E,k}$, i.e., we mostly let $S=\A_{E,k}$ for some elliptic curve $E$ and a fixed integer $k$. Sanna and Tron \cite{ST} considered the above set with the Fibonacci sequence. More precisely, if we denote $(F_{n})_{n\geq 1}$ the Fibonacci sequence, for each positive integer $k$, the set 
$$\A_{F,k}=\{n\geq 1:\gcd(n,F_{n})=k\}$$
satisfies the following properties:

\begin{Theorem}{(Sanna, Tron \cite{ST})}
Let $k$ be a positive integer
\begin{enumerate}[(a)]
\item The asymptotic density of $\A_{F,k}$ exists.
\item The following are equivalent:
\begin{enumerate}[(i)]
\item $\textbf{d}(\A_{F,k})>0.$
\item $\A_{F,k}\neq \emptyset.$
\item $k=\gcd(l(k),F_{l(k)}).$
\end{enumerate}
\end{enumerate}
\end{Theorem}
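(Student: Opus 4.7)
The plan rests on two tools: M\"obius inversion to isolate the exact value of $g(n)$, and the equivalence $k \mid g(n) \iff l(k) \mid n$, which follows from strong divisibility together with the definitions of $r_k$ and $l(k)$. Indeed, $k \mid g(n)$ means $k \mid n$ and $k \mid F_n$; the latter gives $r_k \mid n$, and combined with $k \mid n$ yields $l(k) = \lcm(k,r_k) \mid n$. The converse is equally immediate from $F_{r_k} \mid F_{l(k)} \mid F_n$.

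For part (a), the M\"obius identity $\mathbf{1}[g(n)=k] = \sum_{d\ge 1} \mu(d)\,\mathbf{1}[kd \mid g(n)]$ rewrites as $\sum_{d\ge 1}\mu(d)\,\mathbf{1}[l(kd) \mid n]$, giving
\[
\#(\A_{F,k}\cap[1,x]) = \sum_{d\ge 1}\mu(d)\left\lfloor \frac{x}{l(kd)} \right\rfloor.
\]
The proposed density is then $\sum_d \mu(d)/l(kd)$. To pass to the limit, one must establish absolute convergence of $\sum_d 1/l(kd)$: for Fibonacci, $r_p$ divides $p-\left(\frac{5}{p}\right)$ and is, on average, of order $p$, so $l(kd)$ grows strictly faster than linearly in $d$, providing the required convergence and justifying a termwise passage to the limit.

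For part (b), the implications (i)$\Rightarrow$(ii) and (iii)$\Rightarrow$(ii) are immediate; the latter because (iii) states exactly that $l(k) \in \A_{F,k}$. For (ii)$\Rightarrow$(iii): any $n \in \A_{F,k}$ satisfies $l(k) \mid n$ by the equivalence above, so strong divisibility gives $\gcd(l(k), F_{l(k)}) \mid \gcd(n, F_n) = k$, while $k \mid \gcd(l(k), F_{l(k)})$ is trivial, forcing equality.

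The main obstacle is (iii)$\Rightarrow$(i): upgrading nonemptiness to positive density. The plan is to show that $\textbf{d}(\A_{F,k}) = \sum_d \mu(d)/l(kd)$ is strictly positive under (iii). Concretely, I would look for a positive-density set of integers $m$ such that $m \cdot l(k) \in \A_{F,k}$; the obstruction is that for ``bad'' $m$ some prime $p \nmid k$ can divide both $m\,l(k)$ and $F_{m\,l(k)}$, forcing the gcd to strictly exceed $k$. Showing that the bad primes form a set thin enough that a positive proportion of $m$ avoids all of them simultaneously is the crux of the argument, and again relies on the growth estimates for $r_p$ used in (a). This is where the genuine number-theoretic content of the theorem sits.
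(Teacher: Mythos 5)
First, note that the statement you are proving is quoted in this paper from Sanna--Tron; the paper itself only proves the elliptic analogues, so I am comparing your plan against that template and against what the Fibonacci case actually requires. Your algebraic steps are correct: the equivalence $k\mid g(n)\Leftrightarrow l(k)\mid n$, the exact identity $\#(\A_{F,k}\cap[1,x])=\sum_{d}\mu(d)\lfloor x/l(kd)\rfloor$, and the chain (i)$\Rightarrow$(ii)$\Rightarrow$(iii)$\Rightarrow$(ii) are all sound. The genuine gap is where you yourself locate the content: both your proof of (a) and your proof of (iii)$\Rightarrow$(i) are routed through the absolute convergence of $\sum_{d}|\mu(d)|/l(kd)$ over \emph{all} squarefree $d$, and the justification you offer (``$r_p$ is on average of order $p$, so $l(kd)$ grows strictly faster than linearly in $d$'') does not establish this. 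For composite squarefree $d=p_1\cdots p_s$ one has $l(d)=\lcm(l(p_1),\dots,l(p_s))$, which can be far smaller than the product of the $l(p_i)$, and no average-order statement about $r_p$ at single primes controls this lcm. This convergence is exactly the condition $\sum_{d\ge 1}|\mu(d)|/l(d)<\infty$ that the present paper is forced to take as a \emph{hypothesis} in the elliptic setting, and even for Fibonacci it belongs to the (separate, harder) theorem giving the explicit density formula, not to the existence/positivity statement you were asked to prove.

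The standard route avoids the full series entirely. One first proves the structural identity $\A_{F,k}=\{l(k)m : m\in\NN(\LL_k)\}$ with $\LL_k=\{p: p\mid k\}\cup\{l(kp)/l(k) : p\nmid k\}$ indexed by \emph{primes only}; then existence and positivity of the density follow from the general fact that a set of nonmultiples $\NN(\mathscr{S})$ has positive density whenever $1\notin\mathscr{S}$ and $\sum_{s\in\mathscr{S}}1/s<\infty$. The convergence needed is thus only $\sum_{p}1/l(kp)\ll\sum_p 1/(p\,r_p)$, which is proved by splitting the primes according to whether $r_p\le p^{\gamma}$: the exceptional set satisfies $\#\{p\le x: r_p\le p^{\gamma}\}\ll x^{2\gamma}$ because $\prod_{p}p$ over such primes divides $\prod_{n\le x^{\gamma}}F_n\le 2^{x^{\gamma}(x^{\gamma}+1)/2}$, and partial summation finishes it; the remaining primes contribute $\sum_p p^{-1-\gamma}<\infty$. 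This counting argument is precisely the ``bad primes are thin'' step you flag as the crux and leave unproven; without it (or without an honest proof of the convergence of the full M\"obius series) neither (a) nor (iii)$\Rightarrow$(i) is established.
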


\begin{Theorem}{(Sanna, Tron \cite{ST})}
For each positive integer $k$, we have
$$\textbf{d}(\A_{F,k})=\sum^{\infty}_{d=1}\frac{\mu(d)}{l(dk)},$$
where $\mu$ is the M\"obius function.
\end{Theorem}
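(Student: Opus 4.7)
The plan is to pass from $\A_{F,k}$ to the auxiliary set $\B_{F,k} := \{n \geq 1 : k \mid \gcd(n, F_n)\}$ and recover $\A_{F,k}$ by M\"obius inversion. First I would show that $\B_{F,k}$ is \emph{exactly} the set of multiples of $l(k)$: the condition $k \mid \gcd(n, F_n)$ is equivalent to $k \mid n$ together with $k \mid F_n$, and by the strong divisibility of $(F_n)$ together with the definition of the rank of apparition, the latter is equivalent to $r_k \mid n$. Combining these gives $l(k) = \lcm(k, r_k) \mid n$, so $\#(\B_{F,k} \cap [1,x]) = \lfloor x / l(k) \rfloor$ exactly, and in particular $\textbf{d}(\B_{F,k}) = 1/l(k)$.

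Next, each $n \in \B_{F,k}$ lies in a unique $\A_{F,jk}$ (namely with $j = \gcd(n, F_n)/k$), producing the disjoint decomposition $\B_{F,k} = \bigsqcup_{j \geq 1} \A_{F,jk}$ and the counting identity
$$\lfloor x/l(k) \rfloor = \sum_{j \geq 1} \#(\A_{F,jk} \cap [1,x]).$$
Viewing this as an identity in $k$ holding for every positive integer, I would apply M\"obius inversion on the divisibility lattice of positive multiples of $k$ to obtain
$$\#(\A_{F,k} \cap [1,x]) = \sum_{j \geq 1} \mu(j) \lfloor x/l(jk) \rfloor,$$
which for each fixed $x$ is a finite sum since $l(jk) \geq jk$ forces the terms to vanish once $j > x/k$.

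To conclude, one divides by $x$ and sends $x \to \infty$; formally each term contributes $\mu(j)/l(jk)$, but the interchange of limit and infinite sum is the main obstacle. The natural truncation argument at a parameter $J$ reduces the problem to showing that the tail $\sum_{j > J} 1/l(jk)$ tends to $0$ as $J \to \infty$, uniformly in $x$; here the bound $|\lfloor x/l(jk) \rfloor / x| \leq 1/l(jk)$ provides the required domination. The delicate point is that the trivial estimate $l(jk) \geq jk$ only bounds the tail by a divergent harmonic series, so one must exploit that $l(jk) = \lcm(jk, r_{jk})$ is typically much larger than $jk$ because the Fibonacci rank of apparition $r_n$ grows substantially on average. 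I would appeal to Sanna's quantitative estimates on the distribution of $r_n$ (and hence of $l(n)$) to secure the required decay of $\sum_{j > J} 1/l(jk)$, legitimize the interchange, and deduce the stated identity.
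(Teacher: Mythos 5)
Your argument is correct in outline, and it takes a genuinely different route from the one in the paper (the paper does not reprove the Fibonacci case, but its own proof of the EDS analogue, Lemma \ref{lem3} plus Theorem \ref{thm3}, is the natural comparison). The paper works with the auxiliary set $\B_{E,k}$ defined by the two conditions ``$k\mid g(n)$ and every prime of $g(n)$ divides $k$,'' computes $\#\B_{E,k}(x)$ by an inclusion--exclusion over squarefree $d$ coprime to $k$ using the multiplicative indicator $\varrho(n,d)$, and then recovers $\A_{E,k}$ by a second, finite M\"obius sum over the divisors $c\mid k$. You instead take $\B_{F,k}=\{n: k\mid\gcd(n,F_n)\}$, observe that this is exactly $l(k)\cdot\N$ so that $\#\B_{F,k}(x)=\lfloor x/l(k)\rfloor$, decompose $\B_{F,k}=\bigsqcup_{j\ge1}\A_{F,jk}$, and perform a single M\"obius inversion over the multiples lattice; since $f(m):=\#\A_{F,m}(x)$ vanishes for $m>x$ and $\lfloor x/l(jk)\rfloor=0$ for $jk>x$, the inversion is a legitimate finite computation and yields the exact identity $\#\A_{F,k}(x)=\sum_{j\ge1}\mu(j)\lfloor x/l(jk)\rfloor$ in one step. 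This is cleaner than the paper's two-stage argument; after rearrangement the two exact identities agree, and both buy the same thing, namely a reduction of the density to a dominated-convergence statement with dominating series $\sum_j|\mu(j)|/l(jk)\le\sum_j|\mu(j)|/l(j)$ (using $l(j)\mid l(jk)$).

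The one soft spot is precisely that dominating series: you correctly note that $l(jk)\ge jk$ is useless and that one must show $\sum_{d}|\mu(d)|/l(d)<\infty$, but you then ``appeal to Sanna's quantitative estimates'' without indicating how the prime-level bound $\#\{p\le x: r_{F,p}\le p^{\gamma}\}\ll x^{2\gamma}$ is upgraded to convergence over all squarefree $d$. That upgrade is the real content of the Sanna--Tron theorem and is not a one-liner, since for squarefree $d=p_1\cdots p_s$ one only has $l(d)=\lcm(l(p_1),\dots,l(p_s))$, which can be far smaller than $\prod_i l(p_i)$; indeed, the present paper is unable to prove the analogous convergence for a general EDS and must impose it as the hypothesis (\ref{maincondition}). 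So your proof is complete only modulo citing that convergence lemma from the Fibonacci literature, which is an acceptable but load-bearing outsourcing.
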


We prove analogous results for elliptic divisibility sequences. However, elliptic divisibility sequences have more diverse structure. Especially, different elliptic curves with different fixed points can result various elliptic divisibility sequences. The following are the main results of this note.

\begin{Theorem}
For each positive integer $k$ with $\A_{E,k}\neq \emptyset$, denote
$$\LL_{k}=\left\{ p : p|k \right\}\cup \left\{ \frac{l(kp)}{l(k)}:p\nmid k\right\}.$$
Define
$$\NN(\LL_{k})=\{n\geq 1 : s\nmid n~\text{for all}~s\in\LL_{k} \}.$$
Then
$$\A_{E,k}=\{l(k)m : m\in \NN(\LL_{k})\}.$$
\end{Theorem}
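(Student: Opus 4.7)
The plan is to reformulate the condition $g(n)=k$ as a system of divisibility conditions involving $l$, and then to show that the ratio $l(pk)/l(k)$ collapses to the prime $p$ whenever $p\mid k$; this dichotomy accounts for the two types of elements appearing in $\LL_{k}$.

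First I would observe that for any positive integer $j$, the equivalences $j\mid g(n)\iff j\mid n \text{ and } j\mid D_{n}$ and $j\mid D_{n}\iff r_{j}\mid n$ (which hold by the definition of the rank of apparition together with strong divisibility) combine to give $j\mid g(n)\iff l(j)\mid n$. Applying this with $j=k$ shows $g(n)\geq k\iff l(k)\mid n$, while $g(n)>k$ happens iff $pk\mid g(n)$ for some prime $p$, iff $l(pk)\mid n$ for some prime $p$. Hence
\[
g(n)=k \iff l(k)\mid n \text{ and } l(pk)\nmid n \text{ for every prime }p.
\]
Because $k\mid pk$ forces $l(k)\mid l(pk)$, I would then write $n=l(k)m$ and set $c_{p}=l(pk)/l(k)$, reducing the characterisation to: $c_{p}\nmid m$ for every prime $p$.

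The key technical step will be proving $c_{p}=p$ whenever $p\mid k$. Writing $k=p^{e}k'$ with $\gcd(p,k')=1$, strong divisibility gives $r_{pk}=\lcm(r_{p^{e+1}},r_{k'})$, and the standard EDS lifting property gives $r_{p^{e+1}}\in\{r_{p^{e}},\,p\,r_{p^{e}}\}$. From this $r_{p^{e+1}}\mid p\,r_{p^{e}}\mid p\,r_{k}\mid p\,l(k)$, while $r_{k'}\mid r_{k}\mid l(k)$; combined with the trivial $pk\mid p\,l(k)$ (since $k\mid l(k)$), this gives $l(pk)\mid p\,l(k)$, i.e.\ $c_{p}\mid p$. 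On the other hand $c_{p}\neq 1$: were $l(pk)=l(k)$, every multiple of $l(k)$ would be a multiple of $l(pk)$, forcing $g(n)\geq pk>k$ for every such $n$ and contradicting $\A_{E,k}\neq\emptyset$. Thus $c_{p}=p$.

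Putting the pieces together, the condition ``$c_{p}\nmid m$ for every prime $p$'' splits into two clauses: $p\nmid m$ for each prime $p\mid k$, and $l(pk)/l(k)\nmid m$ for each prime $p\nmid k$. This is exactly $s\nmid m$ for all $s\in\LL_{k}$, i.e.\ $m\in\NN(\LL_{k})$, yielding the desired identity. The main obstacle, and the only non-formal step, is the reduction $c_{p}=p$ for $p\mid k$; it hinges on the $p$-adic lifting lemma for terms of an EDS, which must be applied uniformly to all primes $p$ dividing $k$.
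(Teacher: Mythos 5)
Your proof is correct, and it takes a route that is organized differently from the paper's. You first reduce $g(n)=k$ to the purely divisibility-theoretic statement ``$l(k)\mid n$ and $l(pk)\nmid n$ for every prime $p$'' via Lemma \ref{PreLem}(d), and then the entire content of the theorem becomes the single identity $l(pk)=p\,l(k)$ for $p\mid k$, which you extract from $r_{p^{e+1}}\in\{r_{p^e},\,p\,r_{p^e}\}$ together with the nonemptiness hypothesis (to rule out $l(pk)=l(k)$). The paper instead never computes $l(pk)/l(k)$ for $p\mid k$: it fixes $n=l(k)m$ and verifies $\nu_p(\gcd(l(k)m,D_{l(k)m}))=\nu_p(k)$ prime by prime, using Lemma \ref{lem1} to show $\nu_p(D_{l(k)m})\ge\nu_p(l(k)m)$ (via the bound $r_p<2p$, hence $\nu_p(r_p)\le 1\le\nu_p(D_{r_p})$) and then invoking $\nu_p(k)=\nu_p(l(k))$ from Proposition \ref{thmm1}. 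The two arguments ultimately rest on the same input, namely the valuation inequality $\nu_p(D_{mn})\ge\nu_p(mD_n)$ of \cite[Lemma 5]{SS}: your ``standard lifting property'' is exactly Lemma \ref{lem1} applied with $n=r_{p^e}$ and $m=p$, and you should say so rather than leave it as folklore, since it is the only nontrivial arithmetic fact you use. Your version buys a cleaner structural statement ($\LL_k$ is just $\{l(pk)/l(k):p\ \text{prime}\}$ once one knows the ratio collapses to $p$ for $p\mid k$), closer in spirit to the Fibonacci argument of Sanna--Tron; the paper's version gives slightly more information (the exact valuation $\nu_p(\gcd(l(k)m,D_{l(k)m}))=\nu_p(k)+\nu_p(m)$ for $p\mid k$). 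Two small points of hygiene: write ``$k\mid g(n)$'' rather than ``$g(n)\ge k$'', since the inequality is not what you are using; and note that the lifting step needs $e\ge 1$ so that $p\mid D_{r_{p^e}}$, which is guaranteed precisely because $p\mid k$.
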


\begin{Theorem}
Assume $E/\Q$ to be an elliptic curve which satisfies one of the following:
\begin{enumerate}
\item $E/\Q$ does not have complex multiplication.
\item $E/\Q$ has complex multiplication, and assume either the generalized Riemann Hypothesis or that $E/\Q$ is finitely anomalous, that is to say, for all but finitely many prime $p$, the reduction of $E/\Q$ satisfies
$$E(\mathbb{F}_{p})\neq p.$$
\end{enumerate}
Then for every integer $k>0$:
\begin{enumerate}[(a)]
\item The asymptotic density $\textbf{d}(\A_{E,k})$ exists.
\item $\textbf{d}(\A_{E,k})>0$ if and only if $\A_{E,k}\neq \emptyset$.
\end{enumerate}
\end{Theorem}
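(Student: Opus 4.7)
The plan is to combine the structural theorem with a sieve argument on $\LL_k$, with the analytic input coming from the distribution of the rank of apparition $r_p$. If $\A_{E,k} = \emptyset$ there is nothing to prove, so assume $\A_{E,k} \neq \emptyset$ and invoke the structural theorem, which exhibits $\A_{E,k}$ as the image of $\NN(\LL_k)$ under the injection $m \mapsto l(k)m$. Hence
$$\textbf{d}(\A_{E,k}) = \frac{1}{l(k)}\,\textbf{d}(\NN(\LL_k))$$
as soon as the right-hand density exists, and both (a) and the forward direction of (b) reduce to the study of $\NN(\LL_k)$.

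The second step is to rewrite the elements of $\LL_k$ in a quantitatively usable form. Using strong divisibility one has $r_{mn}=\lcm(r_m,r_n)$ for coprime $m,n$, and a direct computation then gives, for each prime $p\nmid k$,
$$\frac{l(kp)}{l(k)} = \frac{l(p)}{\gcd(l(p),l(k))}, \qquad l(p) = \lcm(p,r_p).$$
Since $r_p \mid \#E(\F_p) = p+1-a_p$, we have $\gcd(p,r_p)=1$ away from the anomalous primes where $a_p=1$, which force $r_p = p$ for all but finitely many such $p$. At non-anomalous primes $l(p) = p\cdot r_p$, so the corresponding element of $\LL_k$ contributes $O(l(k)/(p\,r_p))$ to $\sum_{s\in\LL_k}1/s$, whereas an anomalous prime contributes $O(1/p)$.

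The analytic heart of the proof is therefore establishing $\sum_{s\in\LL_k}1/s<\infty$, and this is the main obstacle. In the non-CM case, an effective form of Serre's open image theorem combined with Chebotarev controls both the density of anomalous primes and the density of primes with anomalously small $r_p$, yielding unconditional convergence. In the CM case, the analogous effective Chebotarev bound is available only under GRH; the alternative "finitely anomalous" hypothesis removes by hand the primes with $l(p)=p$, after which the surviving non-anomalous tail is controlled by explicit class field theory for CM elliptic curves.

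Once summability is in hand, the rest is routine sieve bookkeeping. For each cutoff $T$ set $F_T = \LL_k \cap [1,T]$; being finite, $\NN(F_T)$ has an explicit density $d_T$, and the sandwich
$$d_T - \sum_{s\in\LL_k,\,s>T}\frac{1}{s} \;\leq\; \underline{\textbf{d}}(\NN(\LL_k)) \;\leq\; \overline{\textbf{d}}(\NN(\LL_k)) \;\leq\; d_T$$
lets $T \to \infty$ to give $\textbf{d}(\NN(\LL_k)) = \lim_T d_T$, proving (a). For positivity in (b), the key observation is that most elements $l(kp)/l(k)$ of $\LL_k$ carry a large prime factor that does not appear in any smaller element (a Zsygmondy-type statement for $r_p$), so the infinite sieve behaves essentially like an independent product $\prod_{s\in\LL_k}(1-1/s)$, which is strictly positive precisely because $\sum 1/s<\infty$; combined with the uniform lower bound $d_T \geq \prod_{p\mid k}(1-1/p) - \sum_{s\in F_T,\,s\notin\{p:p\mid k\}}1/s$, this forces $\lim_T d_T > 0$. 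The reverse direction of (b) is immediate.
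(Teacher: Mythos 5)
Your skeleton matches the paper's: reduce to $\A_{E,k}=l(k)\cdot\NN(\LL_k)$ via the structure theorem, prove $\sum_{s\in\LL_k}1/s<\infty$, and conclude by the nonmultiples lemma. You also correctly isolate the anomalous primes (where $l(p)$ collapses to $p$) and point to the right inputs for them: Serre's Chebotarev bound $\#\{p\le x: a_p=1\}\ll x/(\log x)^{5/4-\delta}$ unconditionally in the non-CM case, the Murty--Murty--Saradha GRH bound in the CM case, and triviality under the finitely anomalous hypothesis. However, there is a genuine gap in the other half of the summability argument. At a non-anomalous prime you only get $l(p)=p\,r_p$, and $\sum_p 1/(p\,r_p)$ is \emph{not} automatically finite: you must rule out too many primes with $r_p$ anomalously small, and your proposed mechanism --- ``effective Serre open image combined with Chebotarev controls the density of primes with anomalously small $r_p$'' --- does not work. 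The size of $r_p$ is the order of the \emph{point} $P$ in $E(\F_p)$, which is not detected by the mod-$\ell$ Galois image or by any Chebotarev condition on $a_p$; this is an elliptic-Artin-type quantity, not a trace condition. The paper's argument here is elementary and of a completely different nature: every prime with $r_p\le p^{\gamma}\le x^{\gamma}$ divides $\prod_{n\le x^{\gamma}}D_n$, and Siegel's estimate $\log D_n=\mathcal{O}(n^2)$ gives $\#\{p\le x: r_p\le p^{\gamma}\}\ll x^{3\gamma}$; partial summation with any $\gamma<1/3$ then yields $\sum_{p:\,r_p\le p^{\gamma}}1/p<\infty$, while the remaining primes satisfy $1/(p\,r_p)\le 1/p^{1+\gamma}$. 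Without this (or some substitute) your convergence claim is unsupported. The same remark applies to your appeal to ``explicit class field theory'' for the non-anomalous tail in the CM case --- the elementary growth argument is what is actually needed, and it is CM-independent.

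A secondary weakness is the positivity step. The ``Zsygmondy-type'' independence heuristic is neither proved nor needed, and your explicit lower bound $d_T\ge\prod_{p\mid k}(1-1/p)-\sum_{s\in F_T\setminus\{p:\,p\mid k\}}1/s$ need not remain positive, since the finite sum $\sum_{s\in\LL_k}1/s$ can exceed $\prod_{p\mid k}(1-1/p)$ even when convergent. The correct tool is the Heilbronn--Rohrbach-type inequality $\textbf{d}(\NN(\mathscr{S}))\ge\prod_{s\in\mathscr{S}}(1-1/s)>0$ for $1\notin\mathscr{S}$ and $\sum_{s\in\mathscr{S}}1/s<\infty$, which is exactly the content of the Sanna--Tron lemma the paper cites; your truncation sandwich does correctly establish the \emph{existence} of the density, so only the positivity justification needs to be replaced.
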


Finitely anomalous elliptic curves do exist, e.g., elliptic curves with a  nontrivial torsion point in $E(\Q)$. One can find more instances in Section 3.

\begin{Theorem}
Let $E/\Q$ be an elliptic curve which satisfies one of the following:
\begin{enumerate}
\item $E/\Q$ does not have complex multiplication.
\item $E/\Q$ has complex multiplication, and assume either the generalized Riemann Hypothesis or $E/\Q$ is finitely anomalous.
\end{enumerate}
Let $k$ be a fixed positive integer. Define $\B_{E,k}$ to be the set of positive integers $n$ satisfying the following conditions.
\begin{enumerate}[(a)]
	\item $k \mid g(n).$
	\item If $p\mid g(n)$ for some prime $p$, then $p\mid k$.
\end{enumerate}
Then if we assume
$$\sum_{d=1}^{\infty}\frac{|\mu(d)|}{l(d)}<\infty,$$
we have
$$\textbf{d}(\A_{E,k})=\sum_{c|k}\mu(c)\textbf{d}(\B_{E,ck}).$$
Furthermore, we get
$$\textbf{d}(\A_{E,k})=\sum_{d=1}^{\infty}\frac{\mu(d)}{l(dk)}.$$
\end{Theorem}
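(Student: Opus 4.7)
The plan has two steps: first establish the Möbius-inversion identity $\textbf{d}(\A_{E,k})=\sum_{c\mid k}\mu(c)\textbf{d}(\B_{E,ck})$ via a combinatorial partition, and then compute each $\textbf{d}(\B_{E,ck})$ by a second, finer inclusion--exclusion over primes outside $P:=\{p\text{ prime}:p\mid k\}$. The elementary input throughout is that $S_m:=\{n:m\mid g(n)\}=\{n:l(m)\mid n\}$, so $\textbf{d}(S_m)=1/l(m)$, together with the coprime multiplicativity $r_{ma}=\lcm(r_m,r_a)$ of the rank of apparition---hence $l(ma)=\lcm(l(m),l(a))$---whenever $\gcd(m,a)=1$.

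For the first identity, since every divisor $c$ of $k$ satisfies $\mathrm{supp}(ck)=P$, it is immediate from the definition that
$$\B_{E,ck}=\bigsqcup_{q\in\mathcal{N}_P}\A_{E,ckq},\qquad \mathcal{N}_P:=\{q\geq 1:\mathrm{supp}(q)\subseteq P\}.$$
Granting countable additivity of $\textbf{d}$ on this decomposition, I substitute, swap the (finite) $c$-sum with the $q$-sum, and group by $m:=ckq$ to obtain
$$\sum_{c\mid k}\mu(c)\textbf{d}(\B_{E,ck})=\sum_{m\in k\mathcal{N}_P}\textbf{d}(\A_{E,m})\sum_{c\mid\gcd(k,m/k)}\mu(c).$$
The inner Möbius sum equals $[\gcd(k,m/k)=1]$; combined with $m/k\in\mathcal{N}_P$ this forces $m/k=1$, collapsing the right-hand side to $\textbf{d}(\A_{E,k})$.

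For the second identity, compute $\textbf{d}(\B_{E,ck})$ from the set description $\B_{E,ck}=S_{ck}\setminus\bigcup_{p\notin P}S_{pck}$. Möbius inversion over squarefree $d$ coprime to $k$---using $\bigcap_{p\mid d}S_{pck}=S_{cdk}$ for such $d$---yields
$$\textbf{d}(\B_{E,ck})=\sum_{\substack{d\geq 1\\ \gcd(d,k)=1}}\frac{\mu(d)}{l(cdk)}.$$
Substituting into the first identity and reindexing by $e=cd$---valid because $c\mid k$ and $\gcd(d,k)=1$ force $\gcd(c,d)=1$ (so $\mu(c)\mu(d)=\mu(e)$), and each squarefree $e$ factors uniquely as a product of a squarefree divisor of $k$ and a squarefree integer coprime to $k$---collapses the double sum to $\sum_{e=1}^{\infty}\mu(e)/l(ek)$, as claimed.

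The main obstacle is justifying the two infinite Möbius inversions rigorously, i.e.\ the interchange of density with an infinite sum. For each I would approximate by finite truncations and bound the tails. In the first, the set difference $\B_{E,ck}\setminus\bigsqcup_{q\in F}\A_{E,ckq}$ is contained in $\bigcup_{q\in\mathcal{N}_P\setminus F}S_{ckq}$, of density at most $\sum_{q\in\mathcal{N}_P\setminus F}1/(ckq)$; this vanishes as the finite $F\subset\mathcal{N}_P$ exhausts $\mathcal{N}_P$ because $\sum_{q\in\mathcal{N}_P}1/q=\prod_{p\in P}p/(p-1)<\infty$ whenever $P$ is finite. In the second, truncating the inclusion--exclusion to primes $p\leq N$ yields a finite identity, and the error in the set, bounded by $\sum_{p>N,\,p\nmid k}1/l(pck)\leq\sum_{p>N}1/l(p)$, tends to $0$ because $\sum_p 1/l(p)\leq\sum_d|\mu(d)|/l(d)<\infty$ by hypothesis; the corresponding error in the series is controlled by the same convergent tail.
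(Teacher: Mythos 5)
Your proposal is correct, and its skeleton (an outer M\"obius sum over divisors $c$ of $k$ relating $\A_{E,k}$ to the sets $\B_{E,ck}$, an inner M\"obius sum over squarefree $d$ coprime to $k$ computing $\textbf{d}(\B_{E,ck})=\sum_{(d,k)=1}\mu(d)/l(dck)$, and a final reindexing $e=cd$ justified by absolute convergence) is the same as the paper's. The justifications at both levels differ, though. For the outer identity the paper uses the exact counting identity $\#\A_{E,k}(x)=\sum_{c\mid k}\mu(c)\#\B_{E,ck}(x)$, valid for every $x$ by a \emph{finite} inclusion--exclusion over the primes dividing $k$ (equivalently $\A_{E,k}=\B_{E,k}\setminus\bigcup_{p\mid k}\B_{E,pk}$); since the $c$-sum is finite, the existence of $\textbf{d}(\A_{E,k})$ then falls out of the existence of the $\textbf{d}(\B_{E,ck})$ with no limit interchange. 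You instead decompose $\B_{E,ck}=\bigsqcup_{q}\A_{E,ckq}$ over $q$ supported on the primes of $k$ and invoke countable additivity with a tail bound $\sum_{q\notin F}1/(ckq)$; this is valid but requires the existence of $\textbf{d}(\A_{E,m})$ for all $m$ as an input from the preceding theorem, which is where the CM/non-CM hypotheses genuinely enter your argument (in the paper's proof of this particular statement they are not actually used). For the inner identity the paper (Lemma \ref{lem3}) proves the exact formula $\#\B_{E,ck}(x)=\sum_{d\le x,(d,ck)=1}\mu(d)\lfloor x/l(dck)\rfloor$ via the multiplicative indicator $\varrho(n,d)$ and then controls the sawtooth error using the hypothesis \eqref{maincondition}; you sieve directly at the level of densities, truncating at primes $p\le N$ and bounding both the set-theoretic and series errors by tails of $\sum_d|\mu(d)|/l(d)$ (note the series error over $d$ with a prime factor $>N$ is indeed dominated by $\sum_{d>N}|\mu(d)|/l(d)$, since such $d$ exceed $N$). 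Both routes are sound; yours is cleaner combinatorially, the paper's is more self-contained and yields the exact counting formulas along the way.
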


\begin{Remark}
\label{rmk1}
We are generalizing the result of Sanna and Tron \cite{ST} on the Fibonacci sequence in the following sense: Consider the algebraic group
$$A:x^{2}-5y^2=1$$
and the point $\alpha=(3/2,1/2)\in A(\Q)$. Cubre and Rouse \cite[Lemma 9]{CR} showed that the $n$-th iteration of $\alpha$ has the form
$$n\alpha=\left(\frac{L_{2n}}{2},\frac{F_{2n}}{2}\right),$$
where $(L_{n})_{n\geq 1}$ represents the Lucas sequence defined inductively by $L_{0}=2$, $L_{1}=1$, and $L_{n}=L_{n-1}+L_{n-2}$ for $n\geq 2$.
Elliptic divisibility sequences may be regarded as naturally attached divisibility sequences on a fixed elliptic curve which is another instance of an algebraic group.   
\end{Remark}      

\section{Preliminaries}

First, we note some basic properties of an elliptic divisibility sequence.

\begin{Lemma}
\label{PreLem}
For a given minimal elliptic divisibility sequence $\D=(D_{n})_{n\geq 1}$ with a nontorsion point $P\in E(\Q)$ and for all positive integers $m,~ n$, we have:
\begin{enumerate}[(a)]
	\item $n \mid D_{m}~ \Leftrightarrow ~r_{n} \mid m~ \Leftrightarrow~[m]P\equiv O~ (mod~n),$
where the last congruence is defined on the N\'eron model of $E/\Q$, specially for primes of bad reduction.
	\item $m\mid n$ $\Rightarrow$ $r_{m}\mid r_{n}$.
	\item $m\mid n$ $\Rightarrow$ $g(m)\mid g(n)$.
	\item $n \mid g(m)$ $\Leftrightarrow$ $l(n)\mid m$.
	\item $r_{\lcm(m,n)}=\lcm(r_{m},r_{n})$.
	\item $l(\lcm(m,n))=\lcm(l(m),l(n))$.\\
\end{enumerate}
Moreover, if we define
$$\A_{E}=\{g(n): n\geq 1\},$$
\begin{enumerate}[(g)]
\item $n \in \A_{E}$ $\Leftrightarrow$ $n=g(l(n))$.
\end{enumerate}
\end{Lemma}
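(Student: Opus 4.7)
The plan is to treat part (a) as the anchor: once both equivalences there are proved, parts (b)--(g) reduce to short formal manipulations with gcd, lcm, and the definitions of $r_n$, $g(n)$, $l(n)$. For the analytic content of (a) I would first establish the prime-power case $p^e \mid D_m \Leftrightarrow [m]P \equiv O \pmod{p^e}$. For a prime of good reduction this is the standard formal-group computation: writing $x([m]P) = A_m/D_m^2$ in lowest terms, $p \mid D_m$ is equivalent to $[m]P$ lying in the kernel of reduction $\hat{E}(p\Z_p)$, and the filtration of the formal group yields the refined $p^e$-statement. For primes of bad reduction the argument is the same after replacing $E$ by its N\'eron model and interpreting the congruence on the connected component of the special fibre. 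The full statement $n \mid D_m \Leftrightarrow [m]P \equiv O \pmod{n}$ then follows by factoring $n$ into prime powers and invoking CRT. The second half of (a), namely $n \mid D_m \Leftrightarrow r_n \mid m$, is a consequence of the strong divisibility relation $\gcd(D_a, D_b) = D_{\gcd(a,b)}$: if $n \mid D_m$ then also $n \mid D_{r_n}$, so $n \mid D_{\gcd(m, r_n)}$, and the minimality in the definition of $r_n$ forces $r_n \mid m$; the converse is immediate from $D_{r_n} \mid D_m$.

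Parts (b)--(d) would then come out cheaply. For (b), $m \mid n \mid D_{r_n}$ together with (a) gives $r_m \mid r_n$. For (c), the divisibility of $\D$ gives $D_m \mid D_n$, so $\gcd(m, D_m) \mid \gcd(n, D_n)$. For (d), $n \mid g(m)$ unpacks as $n \mid m$ and $n \mid D_m$; by (a) the latter is $r_n \mid m$, and together these two conditions are equivalent to $\lcm(n, r_n) = l(n) \mid m$.

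For (e) I apply (a) twice: the relation $\lcm(m,n) \mid D_{r_{\lcm(m,n)}}$ implies $r_m, r_n \mid r_{\lcm(m,n)}$, hence $\lcm(r_m, r_n) \mid r_{\lcm(m,n)}$; conversely, $m \mid D_{\lcm(r_m, r_n)}$ and $n \mid D_{\lcm(r_m, r_n)}$ give $\lcm(m,n) \mid D_{\lcm(r_m, r_n)}$, so $r_{\lcm(m,n)} \mid \lcm(r_m, r_n)$. Part (f) is then a formal expansion:
\[
l(\lcm(m,n)) = \lcm(\lcm(m,n), r_{\lcm(m,n)}) = \lcm(m, n, \lcm(r_m, r_n)) = \lcm(l(m), l(n)).
\]
For (g), if $n = g(m)$ for some $m$ then (d) gives $l(n) \mid m$ and hence $g(l(n)) \mid g(m) = n$ by (c); conversely $n \mid l(n)$ and $r_n \mid l(n)$, so (a) yields $n \mid D_{l(n)}$, hence $n \mid g(l(n))$, and the two divisibilities force $n = g(l(n))$. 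The reverse implication in (g) is immediate with witness $m = l(n)$.

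I expect the main obstacle to be the bad-reduction case of (a): making precise the congruence $[m]P \equiv O \pmod{p^e}$ on the N\'eron model and checking that, under the assumed \emph{minimal} Weierstrass equation, this matches exactly the exponent of $p$ dividing $D_m$. Once (a) is nailed down, everything else is bookkeeping with gcd/lcm identities and the strong divisibility of $\D$.
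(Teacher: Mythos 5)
Your proposal is correct and follows essentially the same route as the paper: parts (d)--(g) are proved by exactly the gcd/lcm manipulations the paper uses, and (e)--(f) by the same two-sided divisibility argument. The only difference is that you flesh out part (a) (formal group at good primes, N\'eron model at bad primes, strong divisibility for the $r_n\mid m$ equivalence), where the paper simply declares (a)--(c) to follow from the definitions and the cited results of Silverman--Stange; your sketch is the standard argument behind that citation and is sound.
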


\begin{proof}
(a), (b), (c) follow from the definition. Using (a), we get (d),
\begin{displaymath}
\left.
\begin{tabular}{c} $n\mid g(m)$
\end{tabular} \right.
\Leftrightarrow
\left(
\begin{tabular}{c} $n\mid m$ \\ $n \mid D_{m}$
\end{tabular} \right)
\Leftrightarrow
\left(
\begin{tabular}{c} $n\mid m$ \\ $r_{n}\mid m$
\end{tabular} \right)
\Leftrightarrow
\left.
\begin{tabular}{c} $l(n)\mid m.$
\end{tabular} \right.
\end{displaymath}
From (b), we know
$$\lcm(r_{m},r_{n})\mid r_{\lcm(m,n)}.$$
On the other hand, we also know
$$D_{\lcm(r_{m},r_{n})}\equiv 0~(\text{mod}~ m) ~\text{and}~ D_{\lcm(r_{m},r_{n})}\equiv 0 ~(\text{mod}~ n).$$
Thus,
$$D_{\lcm(r_{m},r_{n})}\equiv 0~(\text{mod} \lcm(m,n))$$
and 
$$r_{\lcm(m,n)}\mid \lcm(r_{m},r_{n})$$
we get (e).
(f) follows from the observation using (e)
\begin{align*}
l(\lcm(m,n)) &= \lcm(\lcm(m,n), r_{\lcm(m,n)})\\
             &= \lcm(\lcm(m,n), \lcm(r_{m},r_{n}))\\
						 &= \lcm(\lcm(m,r_{m}),\lcm(n,r_{n})).
\end{align*}
For (g), since $n$ divides both $l(n)$ and $D_{l(n)}$, we know $n \mid g(l(n))$ for all positive integers $n$. Thus if $k\in \A_{E}$, then $k=g(n)$ for some positive integer $n$. By (d), this implies $g(l(k))\mid g(n)=k$ and $k=g(l(k))$. The converse holds clearly. 
\end{proof}

\begin{Proposition}
\label{thmm1}
For a given positive integer $k$, $\A_{E,k}\neq \emptyset$ {\it if and only if} $k=g(l(k))=\gcd(l(k),D_{l(k)})$.  
\end{Proposition}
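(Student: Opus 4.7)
The plan is to deduce Proposition \ref{thmm1} almost immediately from Lemma \ref{PreLem}(g), using the observation that $\A_{E,k}$ is nonempty exactly when $k$ lies in the image set $\A_{E} = \{g(n) : n \geq 1\}$ of the function $g$.

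For the reverse implication, I would note that if $k = g(l(k))$, then $n = l(k)$ itself witnesses $\A_{E,k} \neq \emptyset$, since by definition $g(l(k)) = k$ means $l(k) \in \A_{E,k}$.

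For the forward implication, suppose $n \in \A_{E,k}$, so $g(n) = k$. Then $k \mid n$ and $k \mid D_{n}$, so by Lemma \ref{PreLem}(d) we have $l(k) \mid n$. Applying Lemma \ref{PreLem}(c) gives $g(l(k)) \mid g(n) = k$. Conversely, $k \mid l(k)$ by definition of $l(k)$, and $k \mid D_{l(k)}$ follows from $r_{k} \mid l(k)$ together with Lemma \ref{PreLem}(a); hence $k \mid \gcd(l(k), D_{l(k)}) = g(l(k))$. Combining the two divisibilities gives $k = g(l(k))$. The second equality $g(l(k)) = \gcd(l(k), D_{l(k)})$ is just the definition of $g$.

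There is no real obstacle here; the proposition is essentially a restatement of Lemma \ref{PreLem}(g) in the language of the sets $\A_{E,k}$, and the only thing to verify carefully is the two-sided divisibility in the forward direction. I would present the argument in a few lines, citing parts (a), (c), and (d) of Lemma \ref{PreLem} for the nontrivial divisibilities.
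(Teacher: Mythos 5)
Your proof is correct and takes essentially the same route as the paper, which simply cites Lemma \ref{PreLem} (the part stating $n\in\A_{E}\Leftrightarrow n=g(l(n))$, labelled (g) though the paper's citation says (f), apparently a typo); you merely unpack that lemma's two-sided divisibility argument instead of citing it. No gaps.
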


\begin{proof}
The proposition follows directly from Lemma \ref{PreLem} $(f)$.
\end{proof}

For a given set $\mathscr{S}$ of positive integers, we define the {\it set of nonmultiples} of $\mathscr{S}$ as
$$\NN(\mathscr{S})=\{n\geq 1 : s\nmid n~\text{for all}~s\in\mathscr{S} \},$$
and its complement the {\it set of multiples} of $\mathscr{S}$ as
$$\MM(\mathscr{S})=\{n\geq 1 : s\mid n~\text{for some}~s\in\mathscr{S} \}.$$
We also need the following lemma.

\begin{Lemma}
\label{lem10}
If $\mathscr{S}$ is a set of positive integers such that 
$$\sum_{s\in\mathscr{S}}\frac{1}{s}<+\infty,$$
then $\NN(\mathscr{S})$ has an asymptotic density. Moreover, if $1 \notin \mathscr{S}$ then $\textbf{d}(\NN(\mathscr{S}))>0$. 
\end{Lemma}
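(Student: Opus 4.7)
The plan is a standard truncation argument. First I would enumerate $\mathscr{S}=\{s_{1},s_{2},\dots\}$ (in any order, possibly finite) and, for each $N$, set $\mathscr{S}_{N}=\{s_{1},\dots,s_{N}\}$. Since $\NN(\mathscr{S}_{N})$ is a periodic subset of $\N$ with period dividing $\lcm(s_{1},\dots,s_{N})$, it has an asymptotic density, which inclusion-exclusion records as
$$\textbf{d}(\NN(\mathscr{S}_{N}))=\sum_{T\subseteq\mathscr{S}_{N}}\frac{(-1)^{|T|}}{\lcm(T)}.$$
Because $\NN(\mathscr{S}_{1})\supseteq\NN(\mathscr{S}_{2})\supseteq\cdots$, the sequence $\textbf{d}(\NN(\mathscr{S}_{N}))$ is nonincreasing and bounded below by $0$, hence converges.

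Next I would compare $\NN(\mathscr{S})$ with $\NN(\mathscr{S}_{N})$. One inclusion $\NN(\mathscr{S})\subseteq\NN(\mathscr{S}_{N})$ is immediate, while the leftover $\NN(\mathscr{S}_{N})\setminus\NN(\mathscr{S})$ is contained in $\bigcup_{k>N}s_{k}\Z$, whose upper density is at most $\sum_{k>N}1/s_{k}$. By the hypothesis $\sum_{s\in\mathscr{S}}1/s<\infty$, this tail tends to $0$ as $N\to\infty$. Combining these bounds,
$$\textbf{d}(\NN(\mathscr{S}_{N}))-\sum_{k>N}\frac{1}{s_{k}}\leq\liminf_{x\to\infty}\frac{\#(\NN(\mathscr{S})\cap[1,x])}{x}\leq\limsup_{x\to\infty}\frac{\#(\NN(\mathscr{S})\cap[1,x])}{x}\leq\textbf{d}(\NN(\mathscr{S}_{N})),$$
and letting $N\to\infty$ sandwiches the liminf and limsup to the common value $\lim_{N\to\infty}\textbf{d}(\NN(\mathscr{S}_{N}))$, proving the existence of $\textbf{d}(\NN(\mathscr{S}))$.

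For the strict positivity when $1\notin\mathscr{S}$, I would invoke the Heilbronn--Rohrbach inequality, which gives
$$\textbf{d}(\NN(\mathscr{S}_{N}))\geq\prod_{i=1}^{N}\left(1-\frac{1}{s_{i}}\right).$$
Since every $s_{i}\geq 2$ one has $1/s_{i}\leq 1/2$, and the estimate $-\log(1-1/s_{i})\leq 1/s_{i}+1/s_{i}^{2}$ together with $\sum 1/s_{i}<\infty$ shows that the infinite product $\prod_{i\geq 1}(1-1/s_{i})$ converges to a strictly positive limit. Passing to the limit then yields $\textbf{d}(\NN(\mathscr{S}))\geq\prod_{i\geq 1}(1-1/s_{i})>0$.

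The main obstacle is this last positivity step. The truncation argument forces the densities $\textbf{d}(\NN(\mathscr{S}_{N}))$ to decrease, and a priori nothing elementary prevents them from collapsing to $0$ in the limit. Ruling this out genuinely requires a correlation-type inequality (Heilbronn--Rohrbach, which amounts to an FKG-style positive association statement for the divisibility events ``$s_{i}\mid n$''); by contrast, the truncation and tail-estimate portions are routine Borel--Cantelli style manipulations.
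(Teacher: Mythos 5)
Your proof is correct. The paper gives no argument of its own here---it simply cites \cite[Lemma 2.3]{ST}---and your truncation-plus-tail-estimate argument for the existence of the density, together with the Heilbronn--Rohrbach inequality $\textbf{d}(\NN(\mathscr{S}_{N}))\geq\prod_{i\leq N}(1-1/s_{i})$ for the positivity, is essentially the standard proof underlying that reference, so nothing further is needed.
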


\begin{proof}
See \cite[Lemma 2.3]{ST}.
\end{proof}

For $\gamma>0$, we define a set of primes
$$\QQ_{\gamma}=\{p : r_{p}\leq p^{\gamma}\}.$$
Also, let
$$\QQ_{\gamma}(x)=\{p\le x :r_{p}\leq p^{\gamma}\}.$$
We prove the following fact.

\begin{Lemma}
\label{lem11}
For all $x,\gamma>0$, we have $\#\QQ_{\gamma}(x)\ll x^{3\gamma}$.
\end{Lemma}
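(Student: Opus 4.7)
The plan is to bound $\#\QQ_\gamma(x)$ by switching the order of quantifiers: instead of summing over primes $p$, I count pairs $(p,r)$ with $p\le x$, $r \le p^\gamma \le x^\gamma$, and $p \mid D_r$, then sum first over $r$. The key inputs are Lemma \ref{PreLem}(a), which translates $p \mid D_r$ into $[r]P \equiv O \pmod p$, and the standard growth estimate $\log D_n \ll n^2$ for elliptic divisibility sequences, which comes from the existence of the canonical height.

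In more detail, for each $p \in \QQ_\gamma(x)$ there exists at least one index $r \le p^\gamma \le x^\gamma$ with $p \mid D_r$ (namely $r = r_p$). Hence
\begin{equation*}
\#\QQ_\gamma(x) \;\le\; \#\bigl\{(p,r) : p \le x,\ r \le x^\gamma,\ p \mid D_r\bigr\} \;\le\; \sum_{r \le x^\gamma} \omega(D_r),
\end{equation*}
where $\omega(D_r)$ denotes the number of distinct prime divisors of $D_r$. Since $\omega(m) \le (\log m)/\log 2$ for any integer $m \ge 2$, we have $\omega(D_r) \ll \log D_r$.

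Next I invoke the classical upper bound on terms of an EDS: writing $x([n]P) = A_n/D_n^2$ in lowest terms, the height satisfies $h(x([n]P)) \ge 2 \log D_n$, while the Weil--canonical height comparison gives $h(x([n]P)) = 2\hat h(P)\, n^2 + O(1)$. Consequently $\log D_n \le \hat h(P)\, n^2 + O(1) \ll n^2$, with the implicit constant depending only on $E$ and $P$. Substituting this into the previous display yields
\begin{equation*}
\#\QQ_\gamma(x) \;\ll\; \sum_{r \le x^\gamma} r^2 \;\ll\; x^{3\gamma},
\end{equation*}
which is the desired bound.

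There is no real obstacle; the only nontrivial ingredient is the quadratic growth of $\log D_n$, which is a well-known consequence of the theory of canonical heights on elliptic curves and is routinely used in the study of EDS. Everything else is a bookkeeping exchange of summations together with the trivial estimate $\omega(m) \ll \log m$.
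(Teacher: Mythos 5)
Your proof is correct and is essentially the paper's argument in a slightly different dress: both bound $\#\QQ_\gamma(x)$ by the number of distinct primes dividing $\prod_{n\le x^\gamma} D_n$ (the paper via $e^{\#\QQ_\gamma(x)}\le 2\prod p \mid 2\prod D_n$, you via $\sum_r \omega(D_r)\ll\sum_r\log D_r$), and both then conclude with the canonical-height estimate $\log D_n\ll n^2$ and the sum $\sum_{n\le x^\gamma}n^2\ll x^{3\gamma}$. No gaps.
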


\begin{proof}
From the definition of $\QQ_{\gamma}(x)$, we know
$$e^{\#\QQ_{\gamma}(x)}\leq 2\cdot\prod_{p\in\QQ_{\gamma}(x)}p.$$
Thus, we have an inequality
\begin{equation}
\#\QQ_{\gamma}(x)\leq \log 2 +\sum_{p \in \QQ_{\gamma}(x)}\log p.
\end{equation}
Moreover, for each prime $p \in \QQ_{\gamma}(x)$, $r_{p}\leq p^{\gamma}\leq x^{\gamma}$ which means
$$p\mid D_{r_{p}}\text{, where}~r_{p}\leq x^{\gamma}.$$
Thus, we get the following divisibility relation
$$\prod_{p\in\QQ_{\gamma}(x)}p~\big| \prod_{n\leq x^{\gamma}}D_{n},$$
and we know from (1),
\begin{equation}
\label{eq2}
\#\QQ_{\gamma}(x)\leq \log 2 +\sum_{n\leq x^{\gamma}}\log D_{n}.
\end{equation}
When we denote $\hat{h}_{E}(P)>0$ the canonical height of $P$, we have the limit essentially proven by Siegel \cite[Remark 11]{SS}
$$\lim_{n\to\infty}\frac{\log|D_{n}|}{n^2}=\hat{h}_{E}(P),$$
and using standard Bachmann-Landau notation
$$\log D_{n} =\mathcal{O}(n^{2}),$$
where the hidden constant depends on $\hat{h}_{E}(P)$.
Therefore, \eqref{eq2} tells
$$\#\QQ_{\gamma}(x) \ll \sum_{n\leq x^{\gamma}}n^{2}\leq \frac{x^{\gamma}(x^{\gamma}+1)(2x^{\gamma}+1)}{6},$$
which implies the desired result.
\end{proof}

\begin{Remark}
One can compare Lemma \ref{lem11} to the analogous lemma for the Fibonacci sequence \cite[Lemma 2.4]{ST}. Let $(F_{n})_{n\geq 1}$ be the Fibonacci sequence and let $r_{F,n}$ be the {\it rank of apparition} of $n$ for the Fibonacci sequence, i.e.,
$$r_{F,n}=\min\{r\geq 1 : n\mid F_{r}\}.$$
Also, define
$$\QQ_{F,\gamma}(x)=\{p : r_{F,p}\leq p^{\gamma}\}.$$
Then for all $x,\gamma >0$, we have $$\#\QQ_{F,\gamma}(x)\ll x^{2\gamma}.$$ 
The difference between the Fibonacci case and the EDS case is the result of the different growth rates of the Fibonacci sequence and the EDS. Note that $$F_{n}\leq 2^{n},~~~\text{for all positive integers $n$,}$$
whereas the EDS satisfies the above limit by Siegel.
\end{Remark}

\begin{Lemma}
\label{lem1}
Let $\D=(D_{n})_{n\geq 1}$ be a minimal EDS, let $n\geq 1$, and let $p$ be a prime satisfying $p\mid D_{n}$. For all $m\geq 1$ we have
$$\nu_{p}(D_{m n})\geq \nu_{p}(m D_{n}).$$

\end{Lemma}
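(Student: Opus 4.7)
The plan is to transfer the problem into the formal group of $E$ at $p$. By Lemma \ref{PreLem}(a), the hypothesis $p \mid D_n$ says that $[n]P$ reduces to $O$ modulo $p$, so $[n]P$, and consequently $[mn]P = [m]([n]P)$, lies in the kernel of reduction at $p$. Since $\D$ is a minimal EDS, this kernel is identified with $\hat{E}(p\Z_p)$, the $p$-adic points of the formal group $\hat{E}$ attached to the minimal Weierstrass equation, and I would carry out the analysis there using the formal parameter $z = -x/y$.

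Next, I would translate $p$-adic valuations of $D_k$ into valuations of the formal parameter. For $[k]P$ with $p \mid D_k$, writing $x([k]P) = A_k/D_k^{2}$ in lowest terms and combining this with the Weierstrass equation yields $\nu_p(y([k]P)) = -3\nu_p(D_k)$, so that $\nu_p(z([k]P)) = \nu_p(D_k)$. Setting $t = z([n]P) \in p\Z_p$, the inequality to be proved becomes
$$\nu_p\bigl([m]_{\hat{E}}(t)\bigr) \;\geq\; \nu_p(m) + \nu_p(t),$$
where $[m]_{\hat{E}}(T) \in \Z_p[[T]]$ is the multiplication-by-$m$ power series on $\hat{E}$.

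The core of the argument is then this formal-group inequality, which I would establish by exploiting the expansion $[m]_{\hat{E}}(T) = mT + \sum_{i\geq 2} c_{i} T^{i}$ with $c_{i} \in \Z_p$. When $\gcd(m,p)=1$ the linear term dominates and equality is immediate. When $m=p$ the linear coefficient is $p$, so $\nu_p(p\cdot t) = 1 + \nu_p(t)$, while every higher term satisfies $\nu_p(c_{i} t^{i}) \geq i\nu_p(t) \geq 2\nu_p(t) \geq 1 + \nu_p(t)$ because $\nu_p(t)\geq 1$; hence $\nu_p([p]_{\hat{E}}(t)) \geq 1 + \nu_p(t)$. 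The general case follows by factoring $m = p^{a} u$ with $p\nmid u$ and iterating: each application of $[p]_{\hat{E}}$ raises the parameter's valuation by at least $1$, while $[u]_{\hat{E}}$ preserves it. Stringing the equalities and inequality together yields
$$\nu_p(D_{mn}) \;=\; \nu_p\bigl([m]_{\hat{E}}(t)\bigr) \;\geq\; \nu_p(m) + \nu_p(t) \;=\; \nu_p(mD_{n}).$$

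The only delicate point I anticipate is the formal-group identification when $p$ is a prime of bad reduction; this is taken care of by working on the smooth identity component of the N\'eron model, which is precisely the convention already built into Lemma \ref{PreLem}(a), and together with the minimality of the Weierstrass equation makes the formal group framework apply uniformly in $p$.
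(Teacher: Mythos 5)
Your argument is correct: the reduction to the formal group via $z=-x/y$, the identification $\nu_p(z([k]P))=\nu_p(D_k)$ for the minimal (hence integral) Weierstrass equation, and the valuation estimate for $[m]_{\hat{E}}(T)=mT+O(T^2)$ split into the cases $p\nmid u$ and $m=p$ are exactly the standard route. The paper itself gives no argument here --- it simply cites \cite[Lemma 5]{SS} --- and the proof in that reference is the same formal-group computation you have reconstructed, so your proposal matches the intended proof.
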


\begin{proof}
\cite[Lemma 5]{SS}
\end{proof}

\begin{Lemma}
\label{lem2}
Let $\D$ be a minimal EDS associated to an elliptic curve $E/\Q$ and point $P \in E(\Q)$ and let $p$ be a prime. Then
$$r_{n}\mid \#\mathfrak{C}(\Z/n\Z),$$
where $\mathfrak{C}$ is the N\'eron model of E, that is to say, a group scheme over $\Spec(\mathbb{Z})$ whose generic fiber is $E/\Q$. One can find a detailed explanation of the N\'eron model in \cite[IV]{SIL} 
Moreover, if $p$ is a prime with $P\in E_{ns}(\F_{p})$, then
$$r_{p}\leq (\sqrt{p}+1)^2.$$
If $P \in E_{ns}(\F_{p})$ and $E$ has a bad reduction at $p$, then $r_{p}$ divides $p-1$, $p+1$, or $p$ depending respectively on whether the reduction is split multiplicative, non-split multiplicative, or additive.  
\end{Lemma}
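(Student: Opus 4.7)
The plan is to reduce each assertion to the group-theoretic statement that the order of an element divides the order of the ambient group. For the first claim, the reduction criterion in Lemma~\ref{PreLem}(a) identifies $r_{n}=\min\{m\geq 1 : n\mid D_{m}\}$ with $\min\{m\geq 1 : [m]P\equiv O\pmod{n}\}$ on the N\'eron model $\mathfrak{C}$. Thus $r_{n}$ is precisely the order of the reduction of $P$ in the finite abelian group $\mathfrak{C}(\Z/n\Z)$, and Lagrange's theorem yields $r_{n}\mid \#\mathfrak{C}(\Z/n\Z)$.

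For the bound $r_{p}\leq (\sqrt{p}+1)^{2}$, I would apply the first part with $n=p$. If $p$ is a prime of good reduction then $\mathfrak{C}(\F_{p})=\tilde{E}(\F_{p})$, and Hasse's theorem gives $\#\tilde{E}(\F_{p})\leq p+1+2\sqrt{p}=(\sqrt{p}+1)^{2}$. If $p$ is a prime of bad reduction and $P\in E_{ns}(\F_{p})$, then the reduction of $P$ lies in $\mathfrak{C}(\F_{p})=E_{ns}(\F_{p})$, whose cardinality is one of $p-1$, $p$, or $p+1$, and is again at most $(\sqrt{p}+1)^{2}$. In either case the bound follows.

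For the last assertion I would recall the standard description of $E_{ns}(\F_{p})$ at a prime of bad reduction: it is isomorphic to $\F_{p}^{\times}$ (order $p-1$) when the reduction is split multiplicative, to the norm-one subgroup of $\F_{p^{2}}^{\times}$ (order $p+1$) when non-split multiplicative, and to the additive group $(\F_{p},+)$ (order $p$) when additive. Combining this with the first part delivers the divisibility of $r_{p}$ by $p-1$, $p+1$, or $p$ respectively.

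The main obstacle is verifying that $[m]P \equiv O \pmod{n}$ is interpreted correctly through the N\'eron model, so that the group-theoretic argument is legitimate at composite $n$ and at primes of bad reduction. This is exactly what Lemma~\ref{PreLem}(a) provides, together with the hypothesis $P\in E_{ns}(\F_{p})$ at bad primes which ensures that the reduction of $P$ lands in the smooth locus where the group law is defined. Once this point is granted, the remaining steps are just Hasse's bound and the classical component-group calculation.
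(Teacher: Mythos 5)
Your argument is correct and is essentially the standard proof of the cited result: the paper itself gives no proof here, simply referring to Silverman--Stange \cite[Lemma 6]{SS}, and your reduction to ``order of an element divides order of the group'' via Lemma \ref{PreLem}(a), Hasse's bound, and the classification of $E_{ns}(\F_p)$ at bad primes is exactly the reasoning behind that reference. One small imprecision: at a bad prime the special fibre of the N\'eron model satisfies $\#\mathfrak{C}(\F_p)=c_p\cdot\#E_{ns}(\F_p)$ with $c_p$ the Tamagawa number, so $\mathfrak{C}(\F_p)$ need not equal $E_{ns}(\F_p)$; but since the hypothesis $P\in E_{ns}(\F_p)$ places the reduction of $P$ in the identity component, its order still divides $\#E_{ns}(\F_p)$ and your conclusion stands.
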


\begin{proof}
\cite[Lemma 6]{SS}
\end{proof}

Also, we can show a nice structural property of $\A_{E,k}$.
\begin{Theorem}
\label{thm1}
For each positive integer $k$ with $\A_{E,k}\neq \emptyset$, denote
$$\LL_{k}=\left\{ p : p|k \right\}\cup \left\{ \frac{l(kp)}{l(k)}:p\nmid k\right\}.$$
Then
$$\A_{E,k}=\{l(k)m : m\in \NN(\LL_{k})\}.$$ 

\end{Theorem}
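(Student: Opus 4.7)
The plan is to reduce the condition "$g(n) = k$" to a family of divisibility conditions on $n$, then rewrite $n = l(k) m$ and translate the conditions into the language of $\NN(\LL_k)$. The starting observation I will use is that $g(n) = k$ holds precisely when $k \mid g(n)$ and for every prime $p$ one has $pk \nmid g(n)$; by Lemma \ref{PreLem}(d), these are equivalent to $l(k) \mid n$ together with $l(pk) \nmid n$ for every prime $p$. Writing $n = l(k) m$, the first condition becomes automatic, so the task is to transform the non-divisibility conditions on $n$ into conditions on $m$.

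For every prime $p$, the divisibility $k \mid pk$ gives $r_k \mid r_{pk}$ by Lemma \ref{PreLem}(b), and hence $l(k) \mid l(pk)$; thus $q_p = l(pk)/l(k)$ is a positive integer, and the condition "$l(pk) \nmid l(k) m$" is simply "$q_p \nmid m$". When $p \nmid k$ this already matches the corresponding element of $\LL_k$, so no further work is needed for those primes.

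The harder case, and the main obstacle, is to show that when $p \mid k$ one has $q_p = p$. I plan to establish $l(pk) \mid p \cdot l(k)$ by applying Lemma \ref{lem1} with $n = l(k)$ and $m = p$: since $p \mid k \mid D_{l(k)}$, this yields $\nu_p(D_{p\, l(k)}) \geq 1 + \nu_p(D_{l(k)}) \geq \nu_p(pk)$, while for primes $q \neq p$ the divisibility $D_{l(k)} \mid D_{p\, l(k)}$ gives $\nu_q(D_{p\, l(k)}) \geq \nu_q(D_{l(k)}) \geq \nu_q(k) = \nu_q(pk)$; together these show $pk \mid D_{p\, l(k)}$, so $r_{pk} \mid p\, l(k)$ and therefore $l(pk) = \lcm(pk, r_{pk}) \mid p\, l(k)$. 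This forces $q_p \in \{1, p\}$. To rule out $q_p = 1$ I invoke the hypothesis $\A_{E,k} \neq \emptyset$ together with Proposition \ref{thmm1}, which give $g(l(k)) = k$: if $l(pk) = l(k)$, then $pk \mid l(k)$ and $pk \mid D_{l(pk)} = D_{l(k)}$, forcing $pk \mid g(l(k)) = k$, which is absurd. Hence $q_p = p$, and "$l(pk) \nmid l(k) m$" reduces to "$p \nmid m$".

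Combining both cases, $l(k) m \in \A_{E,k}$ if and only if $s \nmid m$ for every $s \in \LL_k$, which is the defining property of $\NN(\LL_k)$; this yields the claimed equality. The delicate point is the $p \mid k$ case, since a priori "$p \nmid m$" and "$l(pk)/l(k) \nmid m$" need not coincide --- it is precisely the nonemptiness hypothesis on $\A_{E,k}$ that collapses $q_p$ down to $p$ and reconciles the two conditions.
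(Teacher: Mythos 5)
Your proof is correct, and it takes a genuinely different route through the key case $p\mid k$. The paper also reduces to $n=l(k)m$ and splits on whether $p$ divides $k$, but it works valuation by valuation: writing $l(k)m=r_p m'$, it applies Lemma \ref{lem1} to get $\nu_p(D_{l(k)m})\geq \nu_p(m')+\nu_p(D_{r_p})$ and then invokes the Hasse-type bound $r_p\leq(\sqrt p+1)^2$ from Lemma \ref{lem2} to force $\nu_p(r_p)\leq 1\leq \nu_p(D_{r_p})$, concluding $\nu_p(\gcd(l(k)m,D_{l(k)m}))=\nu_p(l(k))+\nu_p(m)$ and hence that the condition is $p\nmid m$. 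You instead recast $g(n)=k$ as the conjunction ``$l(k)\mid n$ and $l(pk)\nmid n$ for every prime $p$'' via Lemma \ref{PreLem}(d), so that the only thing to compute is the integer $q_p=l(pk)/l(k)$; your application of Lemma \ref{lem1} with $n=l(k)$, $m=p$ gives $pk\mid D_{p\,l(k)}$, hence $l(pk)\mid p\,l(k)$ and $q_p\in\{1,p\}$, and the nonemptiness hypothesis (via Proposition \ref{thmm1}) excludes $q_p=1$. Both arguments lean on Lemma \ref{lem1} and on $k=g(l(k))$ at the same junctures, but yours treats the two cases $p\mid k$ and $p\nmid k$ uniformly (the condition is always ``$l(pk)/l(k)\nmid m$'') and dispenses entirely with the estimate $r_p<2p$, which is a genuine simplification: the paper's inequality $(\sqrt p+1)^2<2p$ is what delivers $\nu_p(r_p)\leq 1$ there, and it is delicate for small primes, whereas your divisibility $l(pk)\mid p\,l(k)$ needs no size information about $r_p$ at all. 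What the paper's computation buys in exchange is the stronger intermediate statement $\nu_p(D_{l(k)m})\geq\nu_p(l(k)m)$ for every $m$, which is of some independent interest but is not needed for the theorem.
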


\begin{proof}
If $n \in \A_{E,k}$, then $k\mid D_{n}$ and $l(k)\mid n$ by (d) of Lemma \ref{PreLem}. Thus it is sufficient to prove that $l(k)m\in \A_{E,k}$ for some $m$ if and only if $m \in \NN(\LL_{k})$.\\
\indent
First, note that $l(k)m \in \A_{E,k}$ if and only if the valuation at $p$
\begin{equation}
\label{eqq3}
\nu_{p}(\gcd(l(k)m, D_{l(k)m}))=\nu_{p}(k)
\end{equation}
for all prime $p$.
Assume the case when $p$ divides $k$. For all positive integer $m$, we have $r_{p}\mid l(k)m$. Write $l(k)m=r_{p}m'$. From Lemma \ref{lem1}, we have
\begin{equation}
\label{eq4}
\nu_{p}(D_{l(k)m})=\nu_{p}(D_{r_{p}m'})\geq \nu_{p}(m'D_{r_{p}})=\nu_{p}(m')+\nu_{p}(D_{r_{p}}).
\end{equation}
By the way, for all prime $p$, Lemma \ref{lem2} tells us
$$r_{p}\leq (\sqrt{p}+1)^{2}<2p.$$
Thus $\nu_{p}(r_{p})\leq 1$. We also know $\nu_{p}(D_{r_{p}})\geq 1$ from the definition. Therefore $\nu_{p}(D_{r_{p}})\geq \nu_{p}(r_{p})$ and we get from (\ref{eq4})
$$\nu_{p}(D_{l(k)m})\geq \nu_{p}(m')+\nu_{p}(r_{p})=\nu_{p}(m'r_{p})=\nu_{p}(l(k)m)=\nu_{p}(l(k))+\nu_{p}(m).$$
Thus we get
\begin{equation}
\label{eq6}
\nu_{p}(\gcd(l(k)m, D_{l(k)m}))=\nu_{p}(l(k)m)=\nu_{p}(l(k))+\nu_{p}(m).
\end{equation}
On the other hand, from Theorem \ref{thmm1}, $\A_{E,k}\neq \emptyset$ implies $k=\gcd(l(k),D_{l(k)})$. With $m=1$, we get
$$\nu_{p}(k)=\nu_{p}(\gcd(l(k),D_{l(k)}))=\nu_{p}(l(k))$$
and with (\ref{eq6})
$$\nu_{p}(\gcd(l(k)m,D_{l(k)m}))=\nu_{p}(k)+\nu_{p}(m).$$
Therefore, the equivalence in (\ref{eqq3}) is true if and only if $p \nmid m $.\\
\indent
We consider now the case when $p$ does not divide $k$. Then (\ref{eqq3}) holds if and only if 
$$p \nmid \gcd(l(k)m,D_{l(k)m}),$$
which is equivalent to $l(p)\nmid l(k)m$ and again equivalent to
$$\frac{l(kp)}{l(k)}=\frac{\lcm(l(k),l(p))}{l(k)}\nmid m,$$
which completes the proof of Theorem \ref{thm1}. 

\end{proof}

\section{The asymptotic density of $\A_{E,k}$ for finitely anomalous elliptic curve}

From Remark \ref{rmk1}, the Fibonacci sequence arises from a simpler algebraic group than the EDS which is determined by an elliptic curve with a fixed point. Thus, one can expect more complicated sequences. For instance, it is much simpler for the Fibonacci sequence to control the rank of apparition $r_{p}$. For instance, it satisfies the following property \cite[Lemma 2.1 (iii)]{ST}:
$$r_{p} \mid p-\big(\frac{p}{5}\big),~\text{where $\big(\frac{p}{5}\big)$ is a Legendre Symbol},$$
whereas for the distribution of $r_{p}$, which is essentially dividing $\#E(\mathbb{F}_{p})$, the order of $E/\Q$ modulo $p$, is involved. Especially, the distribution of $\#E(\F_{p})$ is related to many conjectures such as the Sato-Tate conjecture \cite{TATE} and the Lang-Trotter conjecture \cite{LT}. Although, we can still prove some asymptotic results by restricting our consideration to a certain nice family of elliptic curves.

\begin{Definition}
Given an elliptic curve $E/\Q$, we call $p$ an {\it anomalous prime} of $E$ if it satisfies
$$\#E(\F_{p})=p.$$
That is to say, its trace of Frobenius $a_{p}(E)=p+1-\#E(\F_{p})$ is $1$. We say $E/\Q$ is {\it finitely anomalous} if there are only finitely many places $p$ with $\#E(\F_{p})=p$.
\end{Definition}

We have an observation by Silverman and Stange \cite{SS}.
\begin{Remark}
Let $E/\Q$ be an elliptic curve and $\textbf{D}$ be an EDS associated to the curve $E(\Q)$. An aliquot cycle of length one, i.e., a prime $p\geq 7$ satisfying $r_{p}(\textbf{D})=p$ implies, using Hasse's estimate,
$$r_{p}(\textbf{D})=p~ \Leftrightarrow ~\#E(\F_{p})=p.$$  
\end{Remark}

\begin{Remark}
All elliptic curves with nontrivial torsion group over $\Q$ are always finitely anomalous. This could be observed by the natural embedding of the torsion group. For each good prime $p>7$, 
$$E(\Q)_{tors}\hookrightarrow E(\mathbb{F}_{p}).$$ On the other hand, elliptic curves with nontrivial torsion group over $\Q$ are rather sparsely existing. For instance, one can refer \cite[Theorem 1.1]{GT}.
Independently, Ridgdill \cite[1.3]{RI} proved that any elliptic curve which has $2$-torsion is always finitely anomalous. The proof uses the fact that $a_{p}(E)$ is always even if $p$ does not divide the conductor of $E/\Q$ for $p$ greater than 2. He also notes some classifications of finitely anomalous elliptic curves under some special cases of Galois representation.  
\end{Remark}

\begin{Theorem}
\label{thm2}
Let $E/\Q$ be a finitely anomalous elliptic curve and let $k>0$ be an integer. Then the following holds.
\begin{enumerate}[(a)]
\item The asymptotic density $\textbf{d}(\A_{E,k})$ exists.
\item $\textbf{d}(\A_{E,k})>0$ if and only if $\A_{E,k}\neq \emptyset$.
\end{enumerate}
\end{Theorem}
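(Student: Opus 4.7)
The plan is to combine the structural description of $\A_{E,k}$ from Theorem \ref{thm1} with the density criterion of Lemma \ref{lem10}. Both claims are trivial when $\A_{E,k}=\emptyset$, so assume $\A_{E,k}\neq\emptyset$. By Theorem \ref{thm1}, $\A_{E,k}$ is the image of $\NN(\LL_{k})$ under the injective map $m\mapsto l(k)m$, so
\begin{equation*}
\textbf{d}(\A_{E,k})=\frac{1}{l(k)}\,\textbf{d}(\NN(\LL_{k}))
\end{equation*}
whenever the right-hand density exists. It therefore suffices to verify the two hypotheses of Lemma \ref{lem10} for $\LL_{k}$: first that $\sum_{s\in\LL_{k}}1/s<\infty$, and second (for positivity) that $1\notin\LL_{k}$.

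For the summability, the primes dividing $k$ contribute only finitely many reciprocals, so I would bound $\sum_{p\nmid k}\bigl(l(kp)/l(k)\bigr)^{-1}$. Using $l(kp)=\lcm(l(k),l(p))$ from Lemma \ref{PreLem}(f), this sum is at most $l(k)\sum_{p\nmid k}1/l(p)$, which reduces the task to proving $\sum_{p}1/l(p)<\infty$. Lemma \ref{lem2} gives $r_{p}\leq(\sqrt{p}+1)^{2}<2p$ for every good prime with $P\in E_{ns}(\F_{p})$, so $\gcd(p,r_{p})$ is either $1$ or $p$, and the latter forces $r_{p}=p$, i.e.\ $p$ is anomalous. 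The finitely anomalous hypothesis lets me discard finitely many exceptional primes and assume $l(p)=p\,r_{p}$, reducing the whole problem to showing $\sum_{p}1/(p\,r_{p})<\infty$.

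For this I would combine Lemma \ref{lem11} with partial summation. Fix any $\gamma\in(0,1/3)$, for instance $\gamma=1/4$. For $p\notin\QQ_{\gamma}$ one has $r_{p}>p^{\gamma}$, hence $\sum_{p\notin\QQ_{\gamma}}1/(p\,r_{p})\leq\sum_{p}1/p^{1+\gamma}<\infty$. For $p\in\QQ_{\gamma}$ I bound $1/(p\,r_{p})\leq 1/p$ and apply Abel summation together with $\#\QQ_{\gamma}(x)\ll x^{3\gamma}$:
\begin{equation*}
\sum_{\substack{p\in\QQ_{\gamma}\\ p\leq x}}\frac{1}{p}\leq\frac{\#\QQ_{\gamma}(x)}{x}+\int_{2}^{x}\frac{\#\QQ_{\gamma}(t)}{t^{2}}\,dt\ll x^{3\gamma-1}+\int_{2}^{x}t^{3\gamma-2}\,dt,
\end{equation*}
which stays bounded as $x\to\infty$ because $3\gamma-1<0$. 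This establishes $\sum_{s\in\LL_{k}}1/s<\infty$, so Lemma \ref{lem10} delivers part (a).

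For (b), the hypothesis $\A_{E,k}\neq\emptyset$ forces $\NN(\LL_{k})\neq\emptyset$ via Theorem \ref{thm1}; however $1\in\LL_{k}$ would make $\NN(\LL_{k})$ empty, so $1\notin\LL_{k}$, and the positivity clause of Lemma \ref{lem10} yields $\textbf{d}(\NN(\LL_{k}))>0$, hence $\textbf{d}(\A_{E,k})>0$. The converse is immediate. The principal obstacle is precisely the estimate $\sum_{p}1/(p\,r_{p})<\infty$: the finitely anomalous assumption is crucial because for an anomalous prime one only has $l(p)=p$, and an infinite reservoir of such primes could destroy summability; all the remaining delicacy is absorbed into Lemma \ref{lem11}, which tames the primes with unusually small rank of apparition.
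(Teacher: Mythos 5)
Your proposal is correct and follows essentially the same route as the paper: reduce via Theorem \ref{thm1} and Lemma \ref{lem10} to the summability of $\sum_{s\in\LL_{k}}1/s$, use the finitely anomalous hypothesis to replace $l(p)$ by $p\,r_{p}$ for all but finitely many primes, and then split the primes according to membership in $\QQ_{\gamma}$, handling the small-rank-of-apparition primes with Lemma \ref{lem11} and partial summation. Your explicit justification that $\gcd(p,r_{p})>1$ forces $r_{p}=p$ (via $r_{p}<2p$) is a slightly more careful rendering of a step the paper states without detail, but the argument is the same.
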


\begin{proof}
If $\A_{E,k}=\emptyset$ then $\textbf{d}(\A_{E,k})=0$. Assume $\A_{E,k}\neq \emptyset$. We can induce $k=\gcd(l(k),D_{l(k)})$ by Theorem \ref{thmm1}. Then we have
\begin{equation}
\label{eqq6}
\sum_{n\in\LL_{k}}\frac{1}{n}\ll \sum_{p}\frac{1}{l(kp)}\leq \sum_{p}\frac{1}{l(p)}.
\end{equation}
Since $E$ is a finitely anomalous elliptic curve, for all but finitely many primes
$$ \#E(\F_{p})\neq p~ \text{and}~ \gcd(p,r_{p})=1. $$
Thus, for all but finitely many primes, the following holds
$$l(p)=\lcm(p,r_{p})=p\cdot r_{p}.$$
Therefore, from (\ref{eqq6}), we get

\begin{equation}
\label{eq7}
\sum_{n\in\LL_{k}}\frac{1}{n}\ll \sum_{p}\frac{1}{l(kp)}\leq \sum_{p}\frac{1}{l(p)}\ll \sum_{p}\frac{1}{p\cdot r_{p}}.
\end{equation}
For any $\gamma \in (0,\frac{1}{3})$, we have
$$\sum_{p \notin \QQ_{\gamma}}\frac{1}{p\cdot r_{p}}<\sum_{p\notin\QQ_{\gamma}}\frac{1}{p^{1+\gamma}}<\sum_{n}\frac{1}{n^{1+\gamma}}<+\infty.$$
Furthermore, by Lemma \ref{lem11} and partial summation,
$$\sum_{p\in\QQ_{\gamma}}\frac{1}{p\cdot r_{p}}<\sum_{p\in\QQ_{\gamma}}\frac{1}{p}=\frac{\#\QQ_{\gamma}(t)}{t}\Big|_{t=2}^{+\infty} + \int_{2}^{+\infty} \frac{\#\QQ_{\gamma}(t)}{t^2}~dt \ll \int^{+\infty}_{2}\frac{dt}{t^{2-3\gamma}}<+\infty.$$
We get 
$$\sum_{n \in \LL_{k}}\frac{1}{n}<+\infty.$$
Obviously, $1\notin \LL_{k}$ and thus by Lemma \ref{lem10}, $\textbf{d}(\NN(\LL_{k}))>0$. Therefore by Theorem \ref{thm1}, the asymptotic density of $\A_{E,k}$ has positive density.
\end{proof}

We can actually prove the above result without the finitely anomalous condition using a result of Serre \cite{SER} provided $E$ does not have complex multiplication.

\begin{Theorem}
Let $E/\Q$ be an elliptic curve without complex multiplication and let $k>0$ be an integer. The following holds.
\begin{enumerate}[(a)]
\item The asymptotic density $\textbf{d}(\A_{E,k})$ exists.
\item $\textbf{d}(\A_{E,k})>0$ if and only if $\A_{E,k}\neq \emptyset$.
\end{enumerate}
Moreover, assuming the generalized Riemann hypothesis (GRH), one can prove the same result for elliptic curves with complex multiplication. 
\end{Theorem}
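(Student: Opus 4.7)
The plan is to follow the same skeleton as the proof of Theorem \ref{thm2}, replacing the ``finitely anomalous'' input with quantitative bounds on anomalous primes that come from the work of Serre. As in the previous argument, Theorem \ref{thm1} combined with Lemma \ref{lem10} reduces everything to showing that
\begin{equation*}
\sum_{n \in \LL_k} \frac{1}{n} < +\infty,
\end{equation*}
since the hypothesis $\A_{E,k}\neq\emptyset$ forces $1 \notin \LL_k$ and then Lemma \ref{lem10} gives positive density of $\NN(\LL_k)$, hence of $\A_{E,k}$.

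As in \eqref{eqq6}, the sum over $\LL_k$ is controlled by $\sum_p 1/l(p)$. The point where the earlier proof used the finitely anomalous assumption was in the identity $l(p) = p \cdot r_p$ for all but finitely many $p$. I would split the primes into
\begin{equation*}
\mathcal{P}_1 = \{p : r_p = p\}, \qquad \mathcal{P}_2 = \{p : r_p \neq p\},
\end{equation*}
noting that for $p \geq 7$ Lemma \ref{lem2} gives $r_p \leq (\sqrt{p}+1)^2 < 2p$, so $p \mid r_p$ already forces $r_p = p$; consequently $\gcd(p,r_p)=1$ for $p \in \mathcal{P}_2$ and the argument of Theorem \ref{thm2} (partitioning $\mathcal{P}_2$ using $\QQ_\gamma$ and invoking Lemma \ref{lem11} together with partial summation) handles $\sum_{p \in \mathcal{P}_2} 1/l(p) = \sum_{p \in \mathcal{P}_2} 1/(p \cdot r_p)$ without change.

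The new work is bounding $\sum_{p \in \mathcal{P}_1} 1/l(p) = \sum_{p \in \mathcal{P}_1} 1/p$. By the observation of Silverman--Stange recalled in the excerpt, for $p \geq 7$ we have $p \in \mathcal{P}_1$ if and only if $\#E(\F_p) = p$, i.e., $a_p(E) = 1$, so $\mathcal{P}_1$ is the set of anomalous primes. For $E$ without complex multiplication I would invoke Serre's unconditional estimate (from \cite{SER}) of the form
\begin{equation*}
\#\{p \leq x : a_p(E) = 1\} \ll \frac{x}{(\log x)^{1+\delta}}
\end{equation*}
for some $\delta > 0$; for CM curves under GRH one has the much stronger bound $\ll x^{3/4}$ (again from Serre's effective Chebotarev-based analysis). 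Either bound, fed through partial summation, yields
\begin{equation*}
\sum_{p \in \mathcal{P}_1} \frac{1}{p} < +\infty,
\end{equation*}
and combining with the $\mathcal{P}_2$ estimate gives $\sum_p 1/l(p) < +\infty$, completing the proof via Theorem \ref{thm1} and Lemma \ref{lem10}.

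The main obstacle is precisely invoking Serre's theorem correctly: everything else is a direct adaptation of Theorem \ref{thm2}, but the convergence of $\sum_{p\text{ anomalous}} 1/p$ is not elementary and is the reason the statement is restricted to non-CM curves (or CM curves under GRH). In the CM case without GRH one only knows $\#\{p \leq x : a_p = 1\} = O(x/\log^2 x)$ up to possible exceptional zeros of Hecke $L$-functions, which is exactly why the GRH assumption is needed to salvage the CM case here, paralleling the role of the ``finitely anomalous'' hypothesis in Theorem \ref{thm2}.
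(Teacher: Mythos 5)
Your proposal is correct and follows essentially the same route as the paper: reduce to $\sum_{p}1/l(p)<\infty$ via Theorem \ref{thm1} and Lemma \ref{lem10}, split off the anomalous primes (where $l(p)=p$), handle the non-anomalous primes exactly as in Theorem \ref{thm2}, and control $\sum_{p\ \text{anomalous}}1/p$ by partial summation against Serre's unconditional bound $\ll x/(\log x)^{5/4-\delta}$ in the non-CM case and a GRH-conditional power-saving bound (the paper uses $\ll x^{1/2}(\log x)^{2}$ from Murty--Murty--Saradha rather than your $x^{3/4}$, but either suffices) in the CM case. The only discrepancies are these inessential choices of the precise exponents cited.
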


\begin{proof}
From Theorem (\ref{eqq6}), we know
\begin{equation}
\label{eqq7}
\sum_{n\in\LL_{k}}\frac{1}{n}\ll \sum_{p}\frac{1}{l(kp)}\leq \sum_{p}\frac{1}{l(p)}~=\sum_{\substack{p \\ \text{non-anomalous}}}\frac{1}{p\cdot r_{p}}+\sum_{\substack{p \\ \text{anomalous}}}\frac{1}{p},
\end{equation}
where the summation of non-anomalous primes can be treated in the manner of Theorem \ref{thm2}. Therefore, it is reduced to proving the finiteness of the sum
$$\sum_{\substack{p\\ \text{anomalous}}}\frac{1}{p}.$$
We use the following result of Serre \cite[Corollary 1, p. 174]{SER}.
\begin{equation}
\label{SerEq}
\#\{p\leq x : p\nmid N,~ \text{$p$ is anomalous}\}\ll \frac{x}{(\log x)^{5/4-\delta}},~~\text{for all}~\delta>0,  
\end{equation}
where $N$ is the conductor of $E$. Denote
\begin{equation}
\label{eqq8}
A(x)=\sum_{\substack{1\leq p\leq x\\ \text{anomalous}}}1.
\end{equation}
Using the above notation and Abel's summation formula, for a fixed number $x, y$ and any $0<\delta<\frac{1}{4}$, we can write the sum
\begin{align*}
\sum_{\substack{y\leq p \leq x\\\text{anomalous}}}\frac{1}{p} 
& =A(x)\cdot\frac{1}{x}-A(y)\cdot\frac{1}{y}+\int^{x}_{y}A(t)\cdot\frac{1}{t^{2}}~dt\\
& \ll \frac{x}{(\log x)^{5/4-\delta}}\cdot\frac{1}{x}+\int^{x}_{y}\frac{t}{(\log t)^{5/4-\delta}}\cdot\frac{1}{t^{2}}~dt\\
& \ll \frac{1}{(\log x)^{5/4-\delta}}+\int^{x}_{y}\frac{1}{t(\log t)^{5/4-\delta}}~dt\\
& \ll \frac{1}{(\log x)^{5/4-\delta}}+\int^{\log x}_{\log y}\frac{1}{s^{5/4-\delta}}~ds\\
& = \frac{1}{(\log x)^{5/4-\delta}}-\frac{1}{1/4-\delta}\Big[\frac{1}{(\log x)^{1/4-\delta}}-\frac{1}{(\log y)^{1/4-\delta}}\Big].
\end{align*}

Thus, as $x \to \infty$, 
$$\sum_{\substack{y\leq p \\ \text{anomalous}}}\frac{1}{p}$$
is finite, which completes the proof for elliptic curves without complex multiplication. For proving the same result for elliptic curves having complex multiplication, we use the result of M. R. Murty, V. K. Murty, and N. Saradha \cite[Remark 4.3 (iii)]{MMS} which implies the following inequality. Assuming the $GRH$,
$$\#\{p\leq x : p \nmid N,~ \text{$p$ is anomalous}\}\ll x^{1/2}(\log x)^{2},$$
where $N$ is again the conductor of $E$. Using the same notation above, for a fixed number $x, y$,
\begin{align*}
\sum_{\substack{y\leq p \leq x\\\text{anomalous}}}\frac{1}{p} 
& =A(x)\cdot\frac{1}{x}-A(y)\cdot\frac{1}{y}+\int^{x}_{y}A(t)\cdot\frac{1}{t^{2}}~dt\\
& \ll x^{1/2}(\log x)^{2}\cdot \frac{1}{x}+\int^{x}_{y}t^{1/2}(\log t)^{2}\cdot\frac{1}{t^{2}}~dt\\
& \ll \frac{(\log x)^{2}}{x^{1/2}}+\int^{x}_{y}\frac{(\log t)^{2}}{t^{3/2}}~dt\\
& = \frac{(\log x)^{2}}{x^{1/2}}-2\left[\frac{(\log x)^{2}+4\log x+8}{\sqrt{x}}-\frac{(\log y)^{2}+4\log y+8}{\sqrt{y}}\right].\\
\end{align*}
Thus, as $x \to \infty$,
$$\sum_{\substack{y\leq p \\ \text{anomalous}}}\frac{1}{p}$$
is finite, which shows the result holds for elliptic curves with complex multiplication under the $GRH$ holds.
\end{proof}

\begin{Remark}
Assuming the $GRH$, M. Murty, V. Murty, and Saradha \cite{MMS} get the improved the result of Serre \cite{SER}, which is
\begin{displaymath}
\#\{p\leq x:p\nmid N, a_{p}(E)=a\} 
\ll \left\{ \begin{array}{ll} 
x^{4/5}(\log x)^{-1/5} & \textrm{if $a\neq 0$,}\\ 
x^{3/4} & \textrm{if $a=0$.}\\
\end{array} \right.
\end{displaymath}
On the other hand, when $a\neq 0$, the Lang-Trotter conjecture \cite{LT} suggests
$$\#\{p\leq x:p\nmid N, a_{p}(E)=a\} \sim c\cdot \frac{\sqrt{x}}{\log x}, $$
for some constant $c>0$ depending on $E$. 

\end{Remark}

Furthermore, for a fixed elliptic curve $E/\Q$ and a positive integer $k$,  define $\B_{E,k}$ be the set of positive integers $n$ satisfying the following conditions.
\begin{enumerate}
	\item $k \mid g(n).$
	\item If $p\mid g(n)$ for some prime $p$, then $p\mid k$.
\end{enumerate}

Then we can compute the asymptotic density of $\B_{E,k}$.

\begin{Lemma}
\label{lem3}
Let $E/\Q$ be an elliptic curve and $k$ be a given positive integer. Then we can get
$$\#\B_{E,k}(x)=\sum_{\substack{d\leq x\\ (d,k)=1}}\mu(d)\Big\lfloor \frac{x}{l(dk)}\Big\rfloor.$$
Furthermore, if we assume
$$\sum_{d=1}^{\infty}\frac{|\mu(d)|}{l(d)}<\infty,$$
then the asymptotic density of $\B_{E,k}$ exists and is absolutely convergent to
$$\textbf{d}(\B_{E,k})=\sum_{(d,k)=1}\frac{\mu(d)}{l(dk)}.$$
\end{Lemma}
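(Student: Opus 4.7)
The plan is to characterize $\B_{E,k}$ concretely, apply M\"obius inversion, and then pass to the limit using the convergence hypothesis. By Lemma \ref{PreLem}(d), condition (a) is equivalent to $l(k) \mid n$. For condition (b), note that $p \mid g(n)$ iff $p \mid n$ and $r_p \mid n$, i.e.\ $l(p) \mid n$, so (b) amounts to requiring $l(p) \nmid n$ for every prime $p$ with $p \nmid k$. In other words, $\B_{E,k}$ is the set of multiples of $l(k)$ that are non-multiples of $l(p)$ for every prime $p$ coprime to $k$.

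For the combinatorial identity, I would apply the standard M\"obius identity
$$\sum_{\substack{d \mid g(n) \\ (d,k)=1}} \mu(d) = \mathbf{1}\bigl[\text{every prime divisor of } g(n) \text{ divides } k\bigr],$$
and multiply by the indicator of $k \mid g(n)$. For squarefree $d$ with $(d,k)=1$, the conjunction ``$k \mid g(n)$ and $d \mid g(n)$'' is equivalent to $dk \mid g(n)$, which by Lemma \ref{PreLem}(d) is in turn equivalent to $l(dk) \mid n$. Swapping the two summations then gives
$$\#\B_{E,k}(x) = \sum_{\substack{d \geq 1 \\ (d,k)=1}} \mu(d)\, \#\{n \leq x : l(dk) \mid n\} = \sum_{\substack{d \geq 1 \\ (d,k)=1}} \mu(d)\, \Big\lfloor \frac{x}{l(dk)} \Big\rfloor,$$
and since $l(dk) \geq dk \geq d$, the contribution from $d > x$ vanishes, recovering the stated formula.

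For the density, I would write $\lfloor x/l(dk)\rfloor = x/l(dk) - \{x/l(dk)\}$ and divide by $x$. From Lemma \ref{PreLem}(b) one checks that $l(d) \mid l(dk)$, so $|\mu(d)|/l(dk) \leq |\mu(d)|/l(d)$, which under the convergence hypothesis makes $\sum_{(d,k)=1} \mu(d)/l(dk)$ absolutely convergent; in particular the main term $\sum_{d \leq x,\, (d,k)=1} \mu(d)/l(dk)$ converges to the claimed value. The fractional-part error $x^{-1}\sum_{d \leq x,\, (d,k)=1} \mu(d)\{x/l(dk)\}$ is handled by a standard truncation at $y = \lfloor \sqrt{x}\rfloor$: for $d \leq y$ use $|\{x/l(dk)\}| \leq 1$, giving a contribution of order $y/x$; for $y < d \leq x$ use $|\{x/l(dk)\}| \leq x/l(dk)$, giving at most $\sum_{d > y} |\mu(d)|/l(d)$. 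Both quantities tend to $0$ as $x \to \infty$.

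The only delicacy is controlling this fractional-part error uniformly in $d$, and the absolute-convergence hypothesis $\sum_d |\mu(d)|/l(d) < \infty$ is exactly what is needed to run the truncation argument. All other steps, including the verification $l(d) \mid l(dk)$ and the M\"obius inversion, are routine consequences of the definitions and Lemma \ref{PreLem}.
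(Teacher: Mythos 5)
Your proposal is correct and follows essentially the same route as the paper: a M\"obius/inclusion--exclusion expansion detecting the condition that every prime of $g(n)$ outside $k$ is excluded, the translation $dk \mid g(n) \Leftrightarrow l(dk)\mid n$ via Lemma \ref{PreLem}(d), and the identical main-term/fractional-part split with truncation at $\sqrt{x}$ to pass to the density. The only cosmetic difference is that the paper packages the inclusion--exclusion as an expansion of $\prod_{p\mid n,\,p\nmid k}(1-\varrho(n,p))$ with $\varrho(n,d)=\mathbf{1}[d\mid D_n]$, whereas you sum $\mu(d)$ directly over divisors of $g(n)$ coprime to $k$; these yield the same terms.
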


\begin{proof}
The proof essentially follows \cite[Lemma 4.4]{ST}. For given positive integers $n$ and $d$, we define a function
$$\varrho(n,d)=\left\{ \begin{array}{ll} 
1 & \textrm{if $d\mid D_{n},$}\\ 
0 & \textrm{if $d\nmid D_{n}.$} \end{array} \right.$$
For all positive integers $n$, if two given integers $d$ and $e$ are relatively prime, we can easily observe
$$\varrho(n,de)=\varrho(n,d)\varrho(n,e).$$
That is to say, $\varrho$ is multiplicative in the second coordinate. Note that $n\in \B_{E,k}$ if and only if $l(k)\mid n$ and $\varrho(n,p)=0$ for all prime $p$ satisfying $p\mid n$ but $p\nmid k$. Thus, we can count

\begin{align}
\label{eq8}
\#\B_{E,k}(x) & =\sum_{\substack{n\leq x\\l(k)\mid n}}\prod_{\substack{p\mid n\\p\nmid k}}(1-\varrho(n,p)),\\
\label{eq9}
 &=\sum_{\substack{n\leq x\\l(k)\mid n}}\sum_{\substack{d\mid n\\(d,k)=1}}\mu(d)\varrho(n,d)=\sum_{\substack{d\leq x\\ (d,k)=1}}\mu(d)\sum_{\substack{m\leq x/d\\l(k)\mid dm}}\varrho(dm,d).
\end{align}

Furthermore, we can note that 
$$\varrho(dm,d)=1 ~\text{and}~l(k)\mid dm  \Leftrightarrow \lcm(z(d),l(k))\mid dm, $$
which is equivalent to, if $(d,k)=1$, the following divisibility of $m$:
$$\frac{\lcm(d,\lcm(z(d),l(k)))}{d}=\frac{\lcm(l(d),l(k))}{d}=\frac{l(dk)}{d},$$
and we know
$$\frac{l(dk)}{d}~\Big|~ m.$$
Thus, we can write the last sum of (\ref{eq9}) as
$$\sum_{\substack{m\leq x/d\\l(k)|dm}}\varrho(dm,d)=\sum_{\substack{m\leq x/d\\l(dk)/d|m}}1=\Big\lfloor \frac{x}{l(dk)}\Big\rfloor.$$
We can deduce from (\ref{eq8}),

\begin{equation}
\label{eq10}
\#\B_{E,k}(x)=\sum_{\substack{d\leq x\\ (d,k)=1}}\mu(d)\Big\lfloor \frac{x}{l(dk)}\Big\rfloor.
\end{equation}

Moreover, if we have the additional assumption
$$\sum_{d=1}^{\infty}\frac{|\mu(d)|}{l(d)}<\infty,$$
then from (\ref{eq10}),

\begin{equation}
\label{eq11}
\#\B_{E,k}(x)=\sum_{\substack{d\leq x\\ (d,k)=1}}\mu(d)\Big\lfloor \frac{x}{l(dk)}\Big\rfloor =x\sum_{\substack{d\leq x\\(d,k)=1}}\frac{\mu(d)}{l(dk)}-\sum_{\substack{d\leq x\\(d,k)=1}}\mu(d)\Big\{\frac{x}{l(dk)}\Big\},
\end{equation}
where $\{\cdot\}$ represents the sawtooth function, i.e., $\{x\}=x-\lfloor x \rfloor $. From the assumption,
$$
\sum_{\substack{d\leq x\\ (d,k)=1}}\frac{|\mu(d)|}{l(dk)}\leq \sum^{\infty}_{d=1}\frac{|\mu(d)|}{l(d)}<+\infty.
$$
Moreover, we can bound
\begin{align*}
\Big|\sum_{\substack{d\leq x\\(d,k)=1}}\mu(d)\Big\{\frac{x}{l(dk)}\Big\}\Big| 
& \leq \sum_{d\leq x}|\mu(d)|\Big\{\frac{x}{l(dk)}\Big\} \\
& =\mathcal{O}(x^{1/2})+\sum_{x^{1/2}\leq d\leq x}|\mu(d)|\Big\{\frac{x}{l(dk)}\Big\} \\
& \leq \mathcal{O}(x^{1/2})+x\sum_{d\geq x^{1/2}}\frac{|\mu(d)|}{l(d)}=o(x),
\end{align*}
which could be seen from the assumption: as the convergence implies
$$\sum_{d\geq x^{1/2}}\frac{|\mu(d)|}{l(d)}\to 0 ~~\text{as}~~x\to +\infty.$$
Thus, from (\ref{eq11}), we have
$$\frac{\#\B_{E,k}(x)}{x}\to \sum_{(d,k)=1}\frac{\mu(d)}{l(dk)}~~\text{as}~~x\to\infty.$$
\end{proof}

\begin{Theorem}
\label{thm3}
Let $E/\Q$ be an elliptic curve which satisfies one of the following:
\begin{enumerate}
\item $E/\Q$ does not have complex multiplication.
\item $E/\Q$ has complex multiplication, and either assume the generalized Riemann Hypothesis or that being $E/\Q$ is finitely anomalous.
\end{enumerate}
Let $k$ be a fixed positive integer. If we assume
\begin{equation}
\label{maincondition}
\sum_{d=1}^{\infty}\frac{|\mu(d)|}{l(d)}<\infty,
\end{equation}
we have
$$\textbf{d}(\A_{E,k})=\sum_{c|k}\mu(c)\textbf{d}(\B_{E,ck}).$$
Furthermore,
we get
$$\textbf{d}(\A_{E,k})=\sum_{d=1}^{\infty}\frac{\mu(d)}{l(dk)}.$$
\end{Theorem}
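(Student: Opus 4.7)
The plan is to reduce everything to Lemma \ref{lem3} via a short inclusion--exclusion, and then reindex the resulting double sum.

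First I would establish the set-theoretic identity
$$\A_{E,k} \;=\; \B_{E,k} \;\setminus\; \bigcup_{p\mid k} \B_{E,pk}.$$
The inclusion $\A_{E,k}\subseteq\B_{E,k}$ is immediate from the definitions, and $n\in\A_{E,k}$ precisely when $n\in\B_{E,k}$ and $\nu_p(g(n))=\nu_p(k)$ for every prime $p\mid k$, i.e.\ $pk\nmid g(n)$ for every prime $p\mid k$. A key observation in the background is that if $n\in\B_{E,k}$ and $c\mid k$, then $ck\mid g(n)$ is equivalent to $n\in\B_{E,ck}$, because membership in $\B_{E,k}$ already forces the prime divisors of $g(n)$ to lie among those of $k$ (equivalently, of $ck$).

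Next, applying inclusion--exclusion over the subsets $S$ of prime divisors of $k$ and writing $c=\prod_{p\in S}p$, I obtain
$$\#\A_{E,k}(x) \;=\; \sum_{\substack{c\mid k \\ c\ \mathrm{squarefree}}} \mu(c)\,\#\B_{E,ck}(x) \;=\; \sum_{c\mid k} \mu(c)\,\#\B_{E,ck}(x),$$
the second equality because $\mu(c)=0$ on non-squarefree $c$. The hypotheses on $E$ (CM with GRH or finite anomaly, or non-CM) together with Theorem \ref{thm2} and its non-CM analogue guarantee that each $\textbf{d}(\B_{E,ck})$ may be read off as the limit of $\#\B_{E,ck}(x)/x$; since the sum over $c\mid k$ is finite, dividing by $x$ and letting $x\to\infty$ yields
$$\textbf{d}(\A_{E,k}) \;=\; \sum_{c\mid k}\mu(c)\,\textbf{d}(\B_{E,ck}),$$
which is the first asserted formula.

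For the second formula I would substitute the expression from Lemma \ref{lem3},
$$\textbf{d}(\B_{E,ck}) \;=\; \sum_{(d,\,ck)=1}\frac{\mu(d)}{l(dck)},$$
and carry out the reindexing $e=cd$. For each squarefree positive integer $e$ there is a unique decomposition $e=cd$ with $c=\prod_{p\mid e,\,p\mid k} p$ (so $c\mid k$) and $d=e/c$ (so $(d,k)=1$); moreover $(c,d)=1$ automatically, giving $\mu(c)\mu(d)=\mu(e)$ and $l(dck)=l(ek)$. For non-squarefree $e$ the corresponding $\mu(c)\mu(d)$ vanishes, matching $\mu(e)=0$. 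Absolute convergence of the double sum, needed to justify the rearrangement, follows from the hypothesis \eqref{maincondition} together with the elementary fact that $l(e)\mid l(ek)$, so $1/l(ek)\le 1/l(e)$ and
$$\sum_{c\mid k}\sum_{(d,ck)=1}\frac{|\mu(c)\mu(d)|}{l(dck)} \;\le\; 2^{\omega(k)}\sum_{e\ge 1}\frac{|\mu(e)|}{l(e)} \;<\; \infty.$$
After reindexing one obtains $\textbf{d}(\A_{E,k})=\sum_{e\ge 1}\mu(e)/l(ek)$, which is the desired identity.

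The only real subtlety is the combinatorial bookkeeping in the last step --- matching divisors $c$ of $k$ and coprime $d$ bijectively with squarefree $e$ and checking the compatibility of Möbius values and $l$-values --- and confirming that the absolute convergence justification (using $l(e)\mid l(ek)$) really does bring the problem under the umbrella of hypothesis \eqref{maincondition}. Everything else is immediate from Lemma \ref{lem3} and the previously established density results for $\A_{E,k}$.
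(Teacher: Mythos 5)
Your proposal is correct and follows essentially the same route as the paper: the identity $\#\A_{E,k}(x)=\sum_{c\mid k}\mu(c)\,\#\B_{E,ck}(x)$ via inclusion--exclusion over the prime divisors of $k$, followed by substituting the formula of Lemma \ref{lem3} and reindexing $f=cd$, with the rearrangement justified by the absolute convergence coming from \eqref{maincondition}. You simply spell out the set-theoretic identity and the divisor bijection in more detail than the paper does.
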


\begin{proof}
By the definition of $\B_{E,k}$, we can observe that
$$\#\A_{E,k}(x)=\sum_{c|k}\mu(c)\#\B_{E,ck}(x),$$
by the inclusion-exclusion principle. Thus, by Lemma \ref{lem3}, we have
$$\textbf{d}(\A_{E,k})=\sum_{c|k}\mu(c)\textbf{d}(\B_{E,ck}).$$
Moreover, if the following holds
$$\sum_{d=1}^{\infty}\frac{|\mu(d)|}{l(d)}<\infty,$$
Lemma \ref{lem3} tells
\begin{align*}
\textbf{d}(\A_{E,k}) &=\sum_{c|k}\mu(c)\textbf{d}(\B_{E,ck}).
=\sum_{c|k}\mu(c)\sum_{(e,ck)=1}\frac{\mu(e)}{l(eck)}\\
&=\sum_{c|k}\sum_{e,k}\frac{\mu(ce)}{l(eck)}=\sum^{\infty}_{f=1}\frac{\mu(f)}{l(fk)},
\end{align*}
where the absolute convergence in Lemma \ref{lem3} guarantees the rearrangement of the sum.
\end{proof}

\section{Examples}

\begin{Remark}
The additional assumption in Lemma \ref{lem3} and Theorem \ref{thm3}
\begin{equation}
\label{sum}
\sum_{d=1}^{\infty}\frac{|\mu(d)|}{l(d)}<\infty
\end{equation}
could be compared to the inverse of the Riemann zeta function, i.e., if $s$ is a complex number with $\Re(s)>1$,
$$\sum_{d=1}^{\infty}\frac{\mu(d)}{d^{s}}=\frac{1}{\zeta(s)}.$$
Whereas the case $s=1$ looks closest to our case, in which case $\zeta(s)$ has a simple pole and thus converges to $0$. Also, note that
$$\sum_{d=1}^{\infty}\frac{|\mu(d)|}{l(d)}<\sum^{\infty}_{d=1}\frac{|\mu(d)|}{d}$$
and the second sum diverges. Thus we can conclude the convergence of (\ref{sum}) is highly dependent on the distribution of $r_{d}$, i.e., the group structure of each fiber of the N\'eron model of $E/\Q$ . However, (\ref{sum}) increases extremely slowly in the following examples.
\end{Remark}

\begin{Example}
Let $E_{1}$ be the elliptic curve defined by the Weierstrass equation
$$E_{1} : y^{2}+y=x^{3}-x,~~~~P=(0,0).$$
Then the EDS associated to $(E_{1},P)$ is the sequence
$$1, 1, 1, 1, 2, 1, 3, 5, 7, 4, 23, 29, 59, 129, 314, 65, 1529, 3689, 8209, 16264, \cdots.$$
Denote
$$\BB_{E_{1}}(n)=\sum_{d=1}^{n}\frac{|\mu(d)|}{l(d)}.$$

Note that $\BB_{E_{1}}(n)$ converges as $n\to \infty$ implies \ref{sum}. Also, it is worth noting that $\BB_{E_{1}}(n)$ is an increasing sequence. We can see how slowly it increases from table \ref{table1}.
\begin{table}[!ht]
\centering
\caption{How slowly $\BB_{E_{1}}(n)$ grows}
\label{table1}
\begin{tabular}{|c|c|}
\hline
$n$ & $\mathcal{B}_{E_{1}}(n)$ \\ \hline
50  & 1.27363664516258 \\ \hline
100 & 1.30220546776075 \\ \hline
150 & 1.31814339876107 \\ \hline
200 & 1.32279002218373 \\ \hline
250 & 1.32537586326977 \\ \hline
300 & 1.32806568329757 \\ \hline
350 & 1.32934443230431 \\ \hline
400 & 1.33105981658652 \\ \hline
\end{tabular}
\end{table}
\end{Example}

Also, denote
$$\BB_{E_{1}}(k,n)=\sum_{d=1}^{n}\frac{\mu(d)}{l(kd)},$$
which represents the approximation of $\textbf{d}(\A_{E_{1},k})$ as $n \to \infty$ acccording to the Theorem \ref{thm3} when (\ref{sum}) is satisfied.\\

\begin{table}[!ht]
\centering
\caption{How $\mathcal{B}_{E_{1}}(k,n)$ differs for $k=1,2,5$}
\label{table4}
\begin{tabular}{|c|c|c|c|}
\hline
$n$ & $\mathcal{B}_{E_{1}}(1,n)$ & $\mathcal{B}_{E_{1}}(2,n)$ & $\mathcal{B}_{E_{1}}(5,n)$ \\ \hline
50  & 0.835303029452152          & 0.0219930355845770         & 0.00424286547549155        \\ \hline
100 & 0.818084769942769          & 0.0225599636689219         & 0.00455796737986152        \\ \hline
\end{tabular}
\end{table}

The next instance is an EDS which increases very rapidly. Silverman and Stange \cite{SS} also deal with the following EDS for different purpose.

\begin{Example}
Let $E_{2}$ be the elliptic curve defined by the Weierstrass equation
$$E_{2}:y^2 + y = x^3 + x^2 - 1291874622406186x+ 17872226251073822113702$$with
$$P=(20751503,1073344).$$
Then the EDS associated to $(E_{2},P)$ is the sequence
$$1, 2146689, 286883381041833542301, 60768120452650698495048133538894517,$$
$$23611096745951856413517153888476821489410524330413499766653328,\cdots$$

We can see the EDS increases very rapidly. Also, denote
$$\BB_{E_{2}}(n)=\sum_{d=1}^{n}\frac{|\mu(d)|}{l(d)}$$
and we can see how it grows from table \ref{table2}.

\begin{table}[!h]
\centering
\caption{How slowly $\BB_{E_{2}}(n)$ grows}
\label{table2}
\begin{tabular}{|c|c|}
\hline
$n$ & $\mathcal{B}_{E_{2}}(n)$ \\ \hline
50  & 1.44883391429462 \\ \hline
100 & 1.48730064005378 \\ \hline
150 & 1.50096312029532 \\ \hline
200 & 1.51957559235974 \\ \hline
250 & 1.52472347568884 \\ \hline
300 & 1.53317425352626 \\ \hline
350 & 1.53563342357803 \\ \hline
400 & 1.53866052239358 \\ \hline
\end{tabular}
\end{table}
\end{Example}
We can again define
$$\BB_{E_{2}}(k,n)=\sum_{d=1}^{n}\frac{\mu(d)}{l(kd)},$$
which represents the approximation of $\textbf{d}(\A_{E_{2},k})$ as $n \to \infty$ acccording to the Theorem \ref{thm3} when (\ref{sum}) is satisfied.

\begin{table}[!ht]
\centering
\caption{How $\mathcal{B}_{E_{2}}(k,n)$ differs for $k=1,3,8$}
\label{table3}
\begin{tabular}{|c|c|c|c|}
\hline
$n$ & $\mathcal{B}_{E_{1}}(1,n)$ & $\mathcal{B}_{E_{1}}(3,n)$ & $\mathcal{B}_{E_{1}}(8,n)$ \\ \hline
50  & 0.700013578679941          & 0.0585355444055008         & -0.0199178419306465        \\ \hline
100 & 0.717097840727588          & 0.0638815953096523         & -0.0189660125468538        \\ \hline
\end{tabular}
\end{table}

\begin{Remark}
Of course, we cannot say that Table \ref{table4} and Table \ref{table3} give the actual estimation of $\textbf{d}(\A_{E_{1},k})$ and $\textbf{d}(\A_{E_{2},k})$ without the assumption (\ref{sum}). If (\ref{sum}) holds, we can expect $\mathcal{B}_{E_{1}}(8,n)$ to converge to a small positive number as $n \to \infty$.
\end{Remark}

\section*{Acknowledgement}
The author would like to thank Joseph H. Silverman for
his helpful advice.


\nocite{CR}
\nocite{GT}
\nocite{LT}
\nocite{MMS}
\nocite{RI}
\nocite{SER}
\nocite{SIL}
\nocite{SIL1}
\nocite{SS}
\nocite{ST}
\nocite{TATE}

\end{document}